\newtheorem{cor}{Corollary}
\newtheorem{defi}{Definition}
\newtheorem{rem}{Remark}
\newtheorem{prop}{Proposition}
\newtheorem{lem}{Lemma}
\newtheorem{teo}{Theorem}
\DeclareMathOperator*{\bigdotvee}{\text{\Large $\dot{\lor}$}}
\begin{document}

\title{Finite semantics for fragments of intuitionistic logic}

\author{Felipe S. Albarelli and Rodolfo C. Ertola-Biraben}


\maketitle

\begin{abstract}
 In 1932, G\"odel proved that there is no finite semantics for intuitionistic logic. 
 We consider all fragments of intuitionistic logic and check in each case whether a finite semantics exists. 
 We may fulfill a didactic goal, as little logic and algebra are presupposed. 
\end{abstract}

\section{Introduction}

As is well known, classical logic has a finite semantics. 
In the beginning of the twenties of the twentieth century, 
mathematicians like Kolmogorov, Glivenko, and Heyting began to study intuitionistic logic, 
at that time sometimes called ``the logic of M. Brouwer'' (see \cite {Ko}, \cite{Gl1}, and \cite{Hey}, respectively).  
The natural question arises whether also intuitionistic logic has a finite semantics. 
In 1932, G\"odel proved that there is no finite semantics for intuitionistic logic. 
In his words, he wrote that intuitionistic logic has no \emph{Realisierung mit endlich vielen Elementen} (see \cite{G1}). 
In fact, G\"odel's argument also holds for positive logic, 
that is, the conjunction-disjunction-conditional fragment of intuitionistic logic. 
That is, there is no finite semantics for the mentioned fragment. 
Shortly afterwards, in 1933, 
G\"odel himself proved that the conjunction-negation fragments of intuitionistic and classical logic coincide (see \cite{G2}).  
This implies that the conjunction-negation fragment of intuitionistic logic does have a finite semantics. 
However, G\"odel result does not hold when also considering premisses 
(just note that the Double Negation Law holds in classical logic, but not in intuitionistic logic). 
In particular, in this note we will want to know whether the conjunction-negation fragment has a finite semantics 
also when having premisses. 
In general, it is also natural to ask the same question regarding every fragment of intuitionistic logic, 
including the fragment with no connectives, which will be notated $\varnothing$.   
This we do in the present note.
All fragments appear pictorially in Figure \ref{B16}.  

The set $\mathfrak{F}$ of formulas is obtained in the usual way from the set of (propositional) letters $\Pi$, 
applying the connectives $\land, \lor, \to$, and $\neg$. 
Any subset of the connectives will be called a \emph{fragment} (of intuitionistic logic). 
Let $F$ be a fragment. 
Then, $\mathfrak{F}_F$ denotes the formulas where only the connectives in $F$ are applied. 
Note that in intuitionistic logic the given connectives are independent.  

As regards syntactic matters, 
it is possible to use the corresponding axioms of the Frege-style axiomatization of intuitionistic logic 
in the case of fragments with the conditional. 
For one version of the mentioned axiomatization see \cite[Section 11.1]{D}. 
In the case of fragments without the conditional, it is possible to use Gentzen's Natural Deduction. 
However, as we do not have $\perp$, we need to use something like the following two rules in the case of negation, 
for introduction and elimination, respectively,

\begin{prooftree}
 \AxiomC{[$\mathfrak A$]}
 \noLine
 \UnaryInfC{$\mathfrak B$}
 \AxiomC{[$\mathfrak A$]}
 \noLine
 \UnaryInfC{$\mathfrak{\neg B}$}
 \BinaryInfC{$\mathfrak{\neg A}$}
\end{prooftree}

\vspace{-.3cm}
\noindent and 

\begin{prooftree}
 \AxiomC{$\mathfrak A$}
 \AxiomC{$\mathfrak{\neg A}$}
 \RightLabel{.}
 \BinaryInfC{$\mathfrak B$}
\end{prooftree}

\noindent Gentzen's Natural Deduction rules for the other connectives may be found in \cite[p. 186]{Ge}. 
Alternatively, for fragments without the conditional, 
one may also proceed using the following rules, where $\Gamma \cup \{\varphi\} \subseteq \mathfrak{F}$:

\smallskip

(R$_\in$) If $\varphi \in \Gamma$, then $\Gamma \vdash_i \varphi$,

(R$_m$) If $\Gamma \vdash_i \varphi$ then $\Gamma, \psi \vdash_i \varphi$ (monotonicity),

(R$_t$) If $\Gamma \vdash_i \varphi$ and $\Delta, \varphi \vdash_i \psi$, then $\Gamma, \Delta \vdash_i \psi$ (cut rule),

(R$_\land$) $\Gamma \vdash_i \varphi \land \psi$ iff $\Gamma \vdash_i \varphi$ and $\Gamma \vdash_i \varphi$, 

(R$_\lor$) $\Gamma, \varphi \lor \psi \vdash_i \chi$ iff $\Gamma, \varphi \vdash_i \chi$ and $\Gamma, \psi \vdash_i \chi$, 

(R$_\to$) $\Gamma \vdash_i \varphi \to \psi$ iff $\Gamma, \varphi \vdash_i \psi$, 

(R$_{\neg I}$) If $\Gamma, \varphi \vdash_i \psi$ and $\Gamma, \varphi \vdash_i \neg \psi$, then $\Gamma \vdash_i \neg \varphi$, 

(R$_{\neg E}$) If $\Gamma \vdash_i \varphi$ and $\Gamma \vdash_i \neg \varphi$, then $\Gamma \vdash_i\psi$. 

\smallskip

\noindent When we write $\vdash_F$, for $F$ a fragment, we  mean that only the rules of the connectives in $F$ are applied.

\begin{figure} [ht]
\begin{center}

\begin{tikzpicture}

    \tikzstyle{every node}=[draw, circle, fill=white, minimum size=4pt, inner sep=0pt]


\draw (0,0) node (0) [label= {[label distance= 3mm]0:		\small $\varnothing		$}] {};
\draw (-4.5,1.5) node (1) [label= {[label distance= 1.2mm]182:	\small $\{\land\}		$}] {};
\draw (-1.5,1.5) node (2) [label= {[label distance= 2mm]182:	\small $\{\lor\}		$}] {};
\draw (1.5,1.5) node (3) [label= {[label distance= 3mm]0:	\small $\{\to\}			$}] {};
\draw (4.5,1.5) node (4) [label= {[label distance= 1.2mm]0:	\small $\{\neg\}		$}] {};
\draw (-5.6,3) node (5) [label= {[label distance= 1.2mm]182:	\small $\{ {\land, \lor} \}  	$}] {};
\draw (-3.4,3) node (6) [label= {[label distance= 1.2mm]182:	\small $\{ {\land, \to} \}   	$}] {};
\draw (-1.1,3) node (7) [label= {[label distance= 1.2mm]182:	\small $\{ {\lor, \to} \}    	$}] {};
\draw (1.1,3) node (8) [label= {[label distance= 3mm]0:		\small $\{ {\land, \neg} \}  	$}] {};
\draw (3.4,3) node (9) [label= {[label distance= 1.2mm]0:	\small $\{ {\lor, \neg} \}   	$}] {};
\draw (5.6,3) node (10) [label= {[label distance= 1.2mm]0:	\small $\{ {\to, \neg} \}    	$}] {};
\draw (-4.5,4.5) node (11) [label= {[label distance= 1.2mm]182:	\small $\{ {\land, \lor, \to} \}$}] {};
\draw (-1.5,4.5) node (12) [label= {[label distance= 4mm]0:	\small $\{ {\land, \lor, \neg}\}$}] {};
\draw (1.5,4.5) node (13) [label= {[label distance= 3mm]0:	\small $\{ {\land, \to, \neg} \}$}] {};
\draw (4.5,4.5) node (14) [label= {[label distance= 1.2mm]0:	\small $\{ {\lor, \to, \neg} \} $}] {};
\draw (0,6) node (15) [label= {[label distance= 3mm]0:		\small $\{ {\land, \lor, \to, \neg}$ \} }] {};
\draw (0)--(1)--(5)--(2)--(0)--(4)--(10)--(3)--(0) 
            	 (1)--(8)--(13)--(6)--(1)
            	 (2)--(7)--(14)--(9)--(2)
            	 (7)--(3)--(6)
            	 (14)--(7)--(11)--(6)--(13)--(10)--(4)--(8)--(12)
            	 (13)--(15)--(12)--(8)--(13)--(13)--(10)--(14)--(15)--(11)--(5)--(12)--(9)--(4);
\end{tikzpicture}

\end{center}
\caption{\label{B16} The sixteen fragments of intuitionistic logic}
\end{figure}
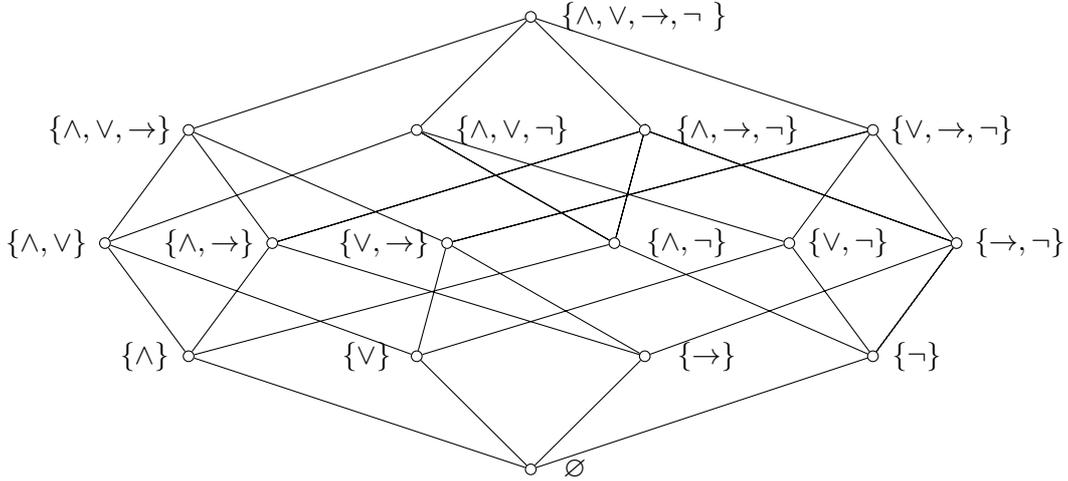

Now , let us define semantic consequence for a given algebra. 

\begin{defi}
 Let $F$ be a fragment, 
 let $\Gamma \cup \{\varphi\} \subseteq \mathfrak{F}_F$, 
 let $\mathfrak{A}_F$ be an algebra with universe $V$ and an operation for every connective in $F$, 
 and let $D \subseteq V$. 
 Then, we say that $\varphi$ is a semantic consequence of $\Gamma$ in $\mathfrak{A}_F$ with $D$ 
 (and use the notation  $\Gamma \vDash_{\mathfrak{A}_F, D} \varphi$) iff 
 for every assingment $v: \Pi \to V$, the unique homomorphism $\bar v: \mathfrak{F}_F \to V$, satisfies that 
 if $\bar v\psi \in D$, for all $\psi \in \Gamma$, then $\bar v\varphi \in D$.  
\end{defi}

We will use the same symbols for both the connectives and the corresponding algebraic operations. 
This ambiguity should not cause any problem. 

Under ``having a finite semantics'' we will understand the same as G\"odel, that is, we will use the following definition. 

\begin{defi} \label{Dfs}
 The fragment $F$ has a finite semantics iff 
 there exists an algebra $\mathfrak{A}_F$ with finite universe $V$ and $D \subseteq V$ such that 
 for every $\Gamma \cup \{ \varphi \} \in \mathfrak{F}_F$ it holds that 
 $\Gamma \vdash_F \varphi$ iff $\Gamma \vDash_{\mathfrak{A}_F, D} \varphi$.   
\end{defi}

\noindent It should be clear that, for each fragment, we will be looking for one algebra (not for a class of algebras) and, 
moreover, for a finite one.  

In Section 2, we will use G\"odel's argument in order to prove that 
any fragment having the conditional, in particular positive and intuitionistic logics, do not have a finite semantics. 
 There will only be eight fragments left to consider.  

In Section 3, we will see that the conjunction-disjunction fragments and the fragments contained in it, 
do have a finite semantics. 

In Section 4, we will consider the case of the disjunction-negation fragment. 

Funally, in Section 5, we consider the negation and conjunction-negation fragments. 

When referring to the conditional fragment, we will say $\{\to\}$-fragment. 
Analogously, in the case of other fragments.   

We think this note may fulfill a didactic goal, as little knowledge of logic and algebra are presupposed.

\section{Fragments with the conditional}

\begin{lem} \label{Lmp} 
 Let $F$ be a fragment with $\to$. Then, 
 (i) $\varphi \vdash_F \varphi$, (ii) $\vdash_F \varphi \to \varphi$, 
 and (iii) $\varphi, \varphi \to \psi \vdash_F \psi$ (\emph{modus ponens}). 
\end{lem}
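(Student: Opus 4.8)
The plan is to work entirely inside the Frege-style axiomatization that the text prescribes for fragments with the conditional. For any fragment $F$ containing $\to$ this includes the two purely implicational axiom schemes, which I will call (A1) $\varphi \to (\psi \to \varphi)$ and (A2) $(\varphi \to (\psi \to \chi)) \to ((\varphi \to \psi) \to (\varphi \to \chi))$, together with modus ponens as the only rule; recall that $\Gamma \vdash_F \varphi$ then means that there is a finite sequence of $\mathfrak{F}_F$-formulas ending in $\varphi$, each entry of which is an instance of an axiom, a member of $\Gamma$, or the conclusion of modus ponens applied to two earlier entries.

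For (i), the one-term sequence consisting of $\varphi$ alone is already a derivation from $\{\varphi\}$, since $\varphi \in \{\varphi\}$. For (iii), the three-term sequence $\varphi$, $\varphi \to \psi$, $\psi$ works: its first two entries lie in $\{\varphi, \varphi \to \psi\}$ and its last entry follows from them by modus ponens. Both of these are immediate from the definition of $\vdash_F$.

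The only part that requires an actual argument is (ii), and here I would give the classical five-line derivation of $\varphi \to \varphi$: instantiate (A2) with $\psi := \varphi \to \varphi$ and $\chi := \varphi$ to obtain $(\varphi \to ((\varphi \to \varphi) \to \varphi)) \to ((\varphi \to (\varphi \to \varphi)) \to (\varphi \to \varphi))$; instantiate (A1) with $\psi := \varphi \to \varphi$ to obtain $\varphi \to ((\varphi \to \varphi) \to \varphi)$; apply modus ponens to get $(\varphi \to (\varphi \to \varphi)) \to (\varphi \to \varphi)$; instantiate (A1) with $\psi := \varphi$ to obtain $\varphi \to (\varphi \to \varphi)$; and apply modus ponens once more to conclude $\varphi \to \varphi$.

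There is essentially no obstacle; the one point worth checking is that every formula occurring in these derivations is built using $\to$ alone, so all the axiom instances invoked are legitimate in $\mathfrak{F}_F$ and all steps stay within $\vdash_F$, no matter which other connectives $F$ happens to contain. (If one preferred instead the sequent-style presentation with the rules (R$_\in$), (R$_m$), (R$_t$), (R$_\to$), then (i) is just (R$_\in$), (ii) follows from (i) by reading (R$_\to$) right to left, and (iii) follows from (R$_\in$) for $\varphi \to \psi$ together with (R$_\to$) read left to right; but since the text adopts the Frege-style axiomatization for fragments with the conditional, I would present the Hilbert derivation above.)
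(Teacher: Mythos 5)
Your proposal is correct, but it takes a different route from the paper's own proof. The paper derives all three parts inside the rule-based presentation: (i) is just (R$_\in$); (ii) follows from (i) by the right-to-left reading of (R$_\to$); and (iii) is obtained by starting from $\varphi \to \psi \vdash_F \varphi \to \psi$ (an instance of (i)) and applying (R$_\to$) with $\Gamma = \{\varphi \to \psi\}$ --- exactly the three-line argument you relegate to your final parenthetical. Your main argument instead works in the Frege-style axiomatization, where (i) and (iii) are immediate from the definition of derivation-from-premises and (ii) requires the classical five-step derivation of $\varphi \to \varphi$ from (A1) and (A2). Both presentations are sanctioned by the paper's introduction, and your observation that every formula in the Hilbert derivation lives in the pure $\to$-language (so the argument is uniform in $F$) is the right thing to check. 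What the paper's route buys is brevity and uniformity with the rest of the paper, which leans on (R$_\to$) and the cut rule throughout (e.g.\ in Lemma \ref{Ldv}); what your route buys is independence from the invertible rule (R$_\to$), at the cost of relying on the specific axiom schemes (A1) and (A2), which the paper never writes down explicitly but only cites from Dunn. Either version is a complete proof.
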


\begin{proof}
 (i) follows just using (R$_\in$) and (ii) follows from (i) using (R$_\to$).  
 In order to prove (iii), use (i) to obtain $\varphi \to \psi \vdash_F \varphi \to \psi$ 
 and then apply (R$_\to$) with $\Gamma = \{ \varphi \to \psi \}$. 
\end{proof}

In what follows we will use G\"odel's argument with formulas of a different form. 
In order to do that, we will use the following abbreviation: 
$$\varphi \dot \lor \psi = (\varphi \to \psi) \to \psi.$$ 

\noindent For example, the formula $[(p_3 \to p_2) \dot \vee (p_3 \to p_1)] \dot \vee (p_2 \to p_1)$ denotes the formula 

$$([([(p_3 \to p_2) \to (p_3 \to p_1)] \to (p_3 \to p_1)) \to (p_2 \to p_1)] \to (p_2 \to p_1)).$$  

Note that $\dot \vee$ is neither commutative nor associative. 
We will omit parentheses supposing associativity to the left. 
So, instead of the given formula, we may as well write 

$$(p_3 \to p_2) \to (p_3 \to p_1) \to (p_3 \to p_1) \to (p_2 \to p_1) \to (p_2 \to p_1).$$

We will use the following lemma. 

\begin{lem} \label{Ldv}
 Let $F$ be a fragment with $\to$ and $\varphi \in \mathfrak{F}_F$ such that $\vdash_F \varphi$. 
 Then (i) $\vdash_F \varphi \dot \vee \psi$, 
 (ii) $\vdash_F \psi \dot \vee \varphi$, for any formula $\psi \in \mathfrak{F}_F$, and
 (iii) If $\psi = \cdots \dot \lor \ \varphi \ \dot \lor \cdots$, 
 where the given dots may be empty at the beginning or the end, 
 then $\vdash_F \psi$. 
\end{lem}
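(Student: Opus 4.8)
The plan is to prove the three parts in sequence, with (i) being the substantive step and (ii), (iii) following by bootstrapping.

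For part (i), I want to show $\vdash_F \varphi \mathbin{\dot\lor} \psi$, that is, $\vdash_F (\varphi \to \psi) \to \psi$. By (R$_\to$) this is equivalent to showing $\varphi \to \psi \vdash_F \psi$. Now I have $\vdash_F \varphi$ by hypothesis, so by monotonicity (R$_m$) I get $\varphi \to \psi \vdash_F \varphi$. Also $\varphi \to \psi \vdash_F \varphi \to \psi$ by (R$_\in$). Applying \emph{modus ponens} (Lemma~\ref{Lmp}(iii)) together with the cut rule (R$_t$) — or more directly, reasoning inside the context $\Gamma = \{\varphi \to \psi\}$ — yields $\varphi \to \psi \vdash_F \psi$, and then (R$_\to$) gives $\vdash_F (\varphi \to \psi) \to \psi$, i.e.\ $\vdash_F \varphi \mathbin{\dot\lor} \psi$.

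For part (ii), I need $\vdash_F \psi \mathbin{\dot\lor} \varphi = (\psi \to \varphi) \to \varphi$. By (R$_\to$) it suffices to show $\psi \to \varphi \vdash_F \varphi$. But $\vdash_F \varphi$ by hypothesis, so monotonicity (R$_m$) immediately gives $\psi \to \varphi \vdash_F \varphi$, and we are done. (Here we do not even use the premise $\psi \to \varphi$.)

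For part (iii), the statement is that if $\psi$ is an iterated $\mathbin{\dot\lor}$ with $\varphi$ occurring as one of the disjuncts — $\psi = \chi_1 \mathbin{\dot\lor} \cdots \mathbin{\dot\lor} \varphi \mathbin{\dot\lor} \cdots \mathbin{\dot\lor} \chi_n$, associating to the left — then $\vdash_F \psi$. I would argue by induction on the number of disjuncts appearing \emph{after} $\varphi$ in the left-associated parsing. The base case, where $\varphi$ is the final disjunct, is exactly part (ii) applied with the appropriate left-hand side playing the role of ``$\psi$'' (or part (i) if $\varphi$ is also the first disjunct, so there is nothing before it). For the inductive step: write $\psi = \psi' \mathbin{\dot\lor} \chi$ where $\psi'$ still contains $\varphi$ as a disjunct with one fewer trailing disjunct; by the induction hypothesis $\vdash_F \psi'$, and then part (i) applied to $\psi'$ gives $\vdash_F \psi' \mathbin{\dot\lor} \chi = \psi$. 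The main obstacle is simply being careful about the left-associativity convention introduced just before the lemma, so that the decomposition $\psi = \psi' \mathbin{\dot\lor} \chi$ is legitimate and the induction is set up on the correct parameter; once that bookkeeping is pinned down, each step is an immediate appeal to (i) or (ii).
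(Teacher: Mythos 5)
Your proof is correct and takes essentially the same approach as the paper: part (i) uses \emph{modus ponens}, (R$_m$)/(R$_t$) and (R$_\to$) in a marginally different order, part (ii) is identical, and part (iii) just spells out the induction that the paper leaves implicit with ``follows using (i) and (ii).''
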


\begin{proof}
 (i) By (mp) we have $\varphi, \varphi \to \psi \vdash_F \psi$. 
 Then, using (R$_\to$), it follows that $\varphi \vdash_F (\varphi \to \psi) \to \psi$. 
 As we have $\vdash_F \varphi$, using the cut rule it follows that $\vdash_F (\varphi \to \psi) \to \psi$. 
 (ii) As we have $\vdash_F \varphi$, by (R$_m$) it follows that $\psi \to \varphi \vdash_F \varphi$. 
 Then, by (R$_\to$) it follows that $\vdash_F (\psi \to \varphi) \to \varphi$. 
 Part (iii) follows using (i) and (ii). 
\end{proof}

We will also use the following algebraic facts, using the $\dot \vee$ notation in a way analogous to the logical case. 

\begin{lem} \label{Lid}
 Let $\mathbf{A}$ be a Heyting algebra with universe $A$. 
 Let $a, b \in A$. 
 Then, (i) If $\mathbf{A}$ is a chain and $a < b$, then $b \to a = a$ and  
 (ii) if $a \leq b$, then $a \dot \lor b = b$.
\end{lem}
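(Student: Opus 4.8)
The plan is to reduce both parts to the defining adjunction of the Heyting implication. Recall that in a Heyting algebra with top element $1$, the operation $\to$ is characterized by: $y \to z$ is the greatest element $x$ satisfying $x \land y \leq z$; in particular $y \to z = 1$ iff $y \leq z$, and $1 \to z = z$. I would state these facts at the outset (or cite them as standard), and then both claims follow by very short computations.

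For (i), with $\mathbf{A}$ a chain and $a < b$, I would compute $b \to a$ as $\max\{x : x \land b \leq a\}$. Using that the order is linear, for any $x$ either $x \leq b$, in which case $x \land b = x$ and the constraint reads $x \leq a$, or $b \leq x$, in which case $x \land b = b$ and the constraint $b \leq a$ fails since $a < b$. Hence $\{x : x \land b \leq a\} = \{x : x \leq a\}$, whose greatest element is $a$ itself, so $b \to a = a$. (One should also check $a \land b \leq a$, which is trivial, to be sure $a$ is in the set.)

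For (ii), unfolding the abbreviation, $a \dot\lor b = (a \to b) \to b$. From $a \leq b$ we get $a \to b = 1$, and therefore $a \dot\lor b = 1 \to b = b$.

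I do not expect any genuine obstacle here: the whole content lies in recalling the definition of $\dot\lor$ together with the residuation and top-element properties of $\to$. The only point worth flagging is that linearity is essential in (i) — the identity $b \to a = a$ can fail in a Heyting algebra that is not a chain — whereas (ii) holds in every Heyting algebra, so part (ii) need not assume that $\mathbf{A}$ is a chain.
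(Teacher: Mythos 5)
Your proof is correct and follows essentially the same route as the paper: part (i) identifies $b\to a$ as the greatest $x$ with $x\land b\le a$ and uses linearity to split into the cases $x\le b$ and $b\le x$, exactly as the paper's steps (1) and (2); part (ii) reduces to $a\to b$ being the top element and $1\to b=b$. Your closing remark that linearity is needed only for (i) is accurate and consistent with how the paper states the hypotheses.
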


\begin{proof}
 (i) It is clear that (1) $b \wedge a \leq a$. 
 Now, let us suppose that $b \wedge c \leq a$, for any $c \in A$. 
 Then, as $A$ is a chain, then either $b \wedge c = b$ or $b \wedge c = c$. 
 Now, as $a < b$, it cannot be the case that $b \wedge c = b$. 
 So, $b \wedge c = c$. 
 Then $c \leq a$. 
 So, we have that, (2) for any $c \in A$, if $b \wedge c \leq a$, then $c \leq a$. 
 From (1) and (2) it follows that $b \to a = a$.  
 
 (ii) Let us suppose that $a \leq b$. So, $a \to b$ is top. 
 So, $(a \to b) \to b \leq b$. 
 It is also the case that $b \leq (a \to b) \to b$. 
\end{proof}

Next come two propositions. 

\begin{prop} \label{Pdfc}
 Let $F$ be a fragment with $\to$ such that $F$ has a finite semantics, say with $n \geq 1$ values. 
 Then, the formulas of the following form are derivable in $F$: 
 $$\alpha_n = \bigdotvee_{1 \leq i < j \leq n+1} p_j \to p_i.$$
\end{prop}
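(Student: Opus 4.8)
The plan is to exploit both directions of the equivalence between $\vdash_F$ and $\vDash_{\mathfrak{A}_F, D}$ guaranteed by Definition~\ref{Dfs}. Let $\mathfrak{A}_F$ be the finite algebra witnessing the semantics, with universe $V$, $|V| = n$, and designated set $D \subseteq V$. Then $\vdash_F \alpha_n$ will follow once we show $\varnothing \vDash_{\mathfrak{A}_F, D} \alpha_n$, i.e. that $\bar v(\alpha_n) \in D$ for every assignment $v : \Pi \to V$. So fix such a $v$. The letters occurring in $\alpha_n$ are $p_1, \dots, p_{n+1}$, and their values $v(p_1), \dots, v(p_{n+1})$ form $n+1$ elements of the $n$-element set $V$; by the pigeonhole principle there are indices $i < j \le n+1$ with $v(p_i) = v(p_j)$, whence $\bar v(p_j \to p_i) = v(p_j) \to v(p_i) = v(p_i) \to v(p_i)$ in $\mathfrak{A}_F$. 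By Lemma~\ref{Lmp}(ii) we have $\vdash_F p_i \to p_i$, so the soundness direction of Definition~\ref{Dfs} gives $\bar v(p_i \to p_i) \in D$; thus the particular $\dot\lor$-disjunct $\delta := p_j \to p_i$ of $\alpha_n$ satisfies $\bar v(\delta) \in D$.

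The remaining task is to propagate membership of one disjunct in $D$ up to the whole (left-associated) disjunction $\alpha_n$. For this I would first record two elementary derivability facts, for arbitrary $\varphi, \psi \in \mathfrak{F}_F$:
$$\varphi \vdash_F \varphi \dot\lor \psi \qquad\text{and}\qquad \psi \vdash_F \varphi \dot\lor \psi .$$
The first is exactly the argument used for Lemma~\ref{Ldv}(i): from \emph{modus ponens} $\varphi, \varphi \to \psi \vdash_F \psi$ and (R$_\to$) one gets $\varphi \vdash_F (\varphi \to \psi) \to \psi$. The second is immediate from $\psi \vdash_F \psi$ (by (R$_\in$)), (R$_m$) to adjoin $\varphi \to \psi$ to the premisses, and (R$_\to$). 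By soundness, for our fixed $v$ these give: if $\bar v(\varphi) \in D$ then $\bar v(\varphi \dot\lor \psi) \in D$, and if $\bar v(\psi) \in D$ then $\bar v(\varphi \dot\lor \psi) \in D$.

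Now enumerate the disjuncts of $\alpha_n$ as $\delta_1, \dots, \delta_m$ in the order fixed by $\bigdotvee$, with association to the left, and let $\delta_{k_0}$ be the disjunct $\delta$ from the first paragraph. A short induction on the bracketing does the rest: the second fact absorbs $\delta_{k_0}$ into $\delta_1 \dot\lor \cdots \dot\lor \delta_{k_0}$ when $k_0 \ge 2$ (and the first fact does so when $k_0 = 1$ and $m \ge 2$, while nothing is needed in the trivial case $k_0 = m = 1$), yielding $\bar v(\delta_1 \dot\lor \cdots \dot\lor \delta_{k_0}) \in D$; then the first fact, applied $m - k_0$ times to absorb $\delta_{k_0 + 1}, \dots, \delta_m$ one by one, yields $\bar v(\delta_1 \dot\lor \cdots \dot\lor \delta_m) = \bar v(\alpha_n) \in D$. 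As $v$ was arbitrary, $\varnothing \vDash_{\mathfrak{A}_F, D} \alpha_n$, and the completeness direction of Definition~\ref{Dfs} delivers $\vdash_F \alpha_n$.

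I expect the only genuine care needed to be the bookkeeping forced by $\dot\lor$ being neither commutative nor associative: the propagation must follow the left-most bracketing, which is precisely why both absorption facts are required — the one feeding $D$-membership in from the left argument and the one feeding it in from the right. The conceptual engine, namely pigeonholing $n+1$ letters into $n$ truth values to force a ``diagonal'' value $a \to a$ into $D$ and noting that $\dot\lor$ acts like a join with respect to $D$-membership, is G\"odel's original argument, transplanted from the disjunctive to the implicational setting by means of the defined connective $\dot\lor$.
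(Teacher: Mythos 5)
Your proof is correct and follows essentially the same argument as the paper's: pigeonhole the $n+1$ letters into $n$ values to force a designated ``diagonal'' disjunct, then absorb it through the left-associated $\dot\lor$-chain using both directions of the equivalence (C). The only (harmless) difference is one of packaging: the paper performs the absorption syntactically on the substitution instance $\beta_n = \alpha_n[p_i/p_j]$ via Lemma~\ref{Ldv}(iii) and then transfers back using $\bar v(\beta_n)=\bar v(\alpha_n)$, whereas you stay on the semantic side and propagate $D$-membership directly through the soundness of $\varphi \vdash_F \varphi\dot\lor\psi$ and $\psi \vdash_F \varphi\dot\lor\psi$.
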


\begin{proof}
 Let us suppose that $F$ is a fragment with $\to$ that has a semantics with $n$ values, that is, 
 there exists an algebra with universe $|V|=n$ and $D \subseteq V$ such that 
 for every $\Gamma \cup \{ \varphi \} \in \mathfrak{F}_F$ it holds that 
 $$\textrm{(C)\ \ }\Gamma \vdash_F \varphi\ \textrm{iff} \ \Gamma \vDash_{\mathfrak{A}_F, D} \varphi.$$  
 Let us take a valuation $w:\mathfrak{F}_F \to V$ and let us consider $w(\alpha_n)$. 
 As there are $n+1$ propositional letters in $\alpha_n$, but only $n$ values, 
 there must be letters $p_i$, $p_j$ such that $w(p_i) = w(p_j)$. 
 Now, let us consider the formula $\beta_n = \alpha_n[p_i/p_j]$.  
 It should be clear that $w(\beta_n) = w(\alpha_n)$. 
 Now, $\beta_n = \cdots \dot \lor (p_j \to p_j) \dot \lor \cdots$, 
 where the given dots may be empty at the beginning or the end.   
 Now, using Lemma \ref {Lmp} (ii), it holds that $\vdash_F p_j \to p_j$ and then, 
 using Lemma \ref{Ldv} (iii), $\vdash_F \beta_n$. 
 Then, by (C), $\vDash_{\mathfrak{A}_F, D} \beta_n$. 
 So, $\vDash_{\mathfrak{A}_F, D} \alpha_n$. 
 So, using (C) in the other direction, $\vdash_F \alpha_n$.
\end{proof}

\begin{prop} \label{Pndfc}
 Let $\alpha_n$ be a formula as in Proposition \ref{Pdfc}. 
 Then, $\nvdash_i \alpha_n$, for any natural number $n \geq 1$. 
\end{prop}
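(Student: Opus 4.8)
The plan is to exhibit, for each $n \ge 1$, a Heyting algebra and an assignment under which $\alpha_n$ does not take the top value, and then to invoke soundness: since intuitionistic logic is sound with respect to Heyting algebras having the top element as sole designated value (a standard fact; here only the soundness of the rules for $\to$ is needed, since $\alpha_n$ contains no other connective), this yields $\nvdash_i \alpha_n$.

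As the algebra I would take the chain $C_{n+1}$ with universe $\{a_1 < a_2 < \dots < a_{n+1}\}$, viewed as a Heyting algebra with top $a_{n+1}$, and as the assignment $v$ the one with $v(p_k) = a_k$ for $1 \le k \le n+1$ (the remaining letters do not occur in $\alpha_n$). The computation has two parts. First, the disjuncts: for $i < j$, Lemma \ref{Lid}(i) gives $\bar v(p_j \to p_i) = a_j \to a_i = a_i$, a value lying strictly below $a_{n+1}$ since $i \le n$, and depending only on $i$. Second, the fold: reading the disjuncts of $\alpha_n$ from left to right the pairs $(i,j)$ occur with $i$ non-decreasing (the disjuncts $p_2 \to p_1, \dots, p_{n+1} \to p_1$ first, then $p_3 \to p_2, \dots, p_{n+1} \to p_2$, and so on, ending with $p_{n+1} \to p_n$), so the corresponding values $a_1, \dots, a_1, a_2, \dots, a_2, \dots, a_n$ form a non-decreasing sequence. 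Folding this sequence with $\dot\lor$ to the left, and applying Lemma \ref{Lid}(ii) at each step (which in particular covers $a \dot\lor a = a$), the running value is at every stage the value of the disjunct just incorporated; hence $\bar v(\alpha_n) = a_n \ne a_{n+1}$. So $\alpha_n$ is not valid in $C_{n+1}$, and therefore $\nvdash_i \alpha_n$, for every $n \ge 1$.

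The one place needing care, and what I expect to be the main obstacle, is this $\dot\lor$-fold, precisely because $\dot\lor$ is not associative. In the chain $C_{n+1}$ one has $b \dot\lor c = (b \to c) \to c = \top$ whenever $c < b$, and $\top \dot\lor x = \top$ for every $x$; so a single disjunct whose value dropped strictly below the current running value would push that value up to $\top$ and keep it there, and $\alpha_n$ would come out equal to $\top$. It is therefore essential that the disjuncts of $\alpha_n$ be listed so that their values are non-decreasing, equivalently with the index $i$ non-decreasing; with the disjuncts in a different order the same algebra $C_{n+1}$ would in fact validate $\alpha_n$. Apart from this, the proof is just the two clauses of Lemma \ref{Lid} together with the standard soundness of intuitionistic logic over Heyting algebras.
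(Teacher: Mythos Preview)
Your proof is correct and follows the paper's own argument essentially step for step: the same $(n{+}1)$-element chain, the same assignment $p_k \mapsto a_k$, reduction of each disjunct via Lemma~\ref{Lid}(i), and then collapse of the left-associated $\dot\lor$-fold via Lemma~\ref{Lid}(ii) to obtain the value $a_n \neq a_{n+1}$. Your closing paragraph on why the ordering of the disjuncts is essential---since a drop would send the running value to $\top$---is a point the paper leaves implicit but which is indeed needed for the computation to go through.
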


\begin{proof}
 Let us consider the $n+1$-element chain of the first $n+1$ natural numbers with the usual order. 
 Defining meet, join, relative meet complement and meet complement as usual, 
 the given chain is a Heyting algebra. 
 Let us consider any assignment $w$ such that $w(p_i)=i$, for  $1 \leq i \leq n+1$. 
 Then,  
 \begin{align*}
  \bar w(\alpha_n) & = \bigdotvee_{1 \leq i < j \leq n+1} \bar wp_j \to \bar wp_i, \\
  & = \bigdotvee_{1 \leq i < j \leq n+1} \bar wp_i, \textrm{(as} \ \bar wp_i < wp_j\textrm{, using Lemma \ref{Lid}(i)),} \\
  & = \bar wp_1 \dot \lor \bar wp_1 \dot \lor \cdots \dot \lor wp_2 \dot \lor wp_2 \cdots \dot \lor wp_n, \\
  & = \bar wp_1 \dot \lor \bar wp_2 \dot \lor \cdots \dot \lor wp_n \textrm{, (by Lemma \ref{Lid}(ii)),} \\
  & = \bar wp_n \textrm{, (by Lemma \ref{Lid}(ii)),}\\
  & = n, \\
  & \neq n+1.
 \end{align*}
 Using soundness, it follows that $\nvdash_i \alpha_n$. 
\end{proof}

\begin{teo}
 Fragments containing $\to$ do not have a finite semantics.  
\end{teo}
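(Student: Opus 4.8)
The plan is to derive a contradiction by playing the two preceding propositions against each other. Suppose, for contradiction, that some fragment $F$ containing $\to$ had a finite semantics. By Definition \ref{Dfs} there would then be an algebra $\mathfrak{A}_F$ with finite universe $V$ and a set $D \subseteq V$ such that $\Gamma \vdash_F \varphi$ iff $\Gamma \vDash_{\mathfrak{A}_F, D} \varphi$ for every $\Gamma \cup \{\varphi\} \subseteq \mathfrak{F}_F$. Writing $n = |V|$, the universe of an algebra is non-empty (and anyway, were $V$ empty the biconditional would collapse, since everything would be a vacuous semantic consequence while, say, $\nvdash_F p_1$), so $n \geq 1$, which is exactly the hypothesis needed to invoke Proposition \ref{Pdfc}.

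First I would apply Proposition \ref{Pdfc} to the fragment $F$ with its $n \geq 1$ values, obtaining $\vdash_F \alpha_n$ for the formula $\alpha_n = \bigdotvee_{1 \leq i < j \leq n+1} p_j \to p_i$. Since $\alpha_n$ is built using only $\to$, it lies in $\mathfrak{F}_F$ for every fragment $F$ with $\to$, and in particular it is a formula of the full language $\mathfrak{F}$. Next I would observe that $\vdash_F$ is included in $\vdash_i$ on $\mathfrak{F}_F$: by the conventions fixed above, a derivation witnessing $\Gamma \vdash_F \varphi$ uses only the rules (R$_\in$), (R$_m$), (R$_t$), (R$_\to$) together with the rules for whatever other connectives occur in $F$, and each of these is among the rules defining $\vdash_i$; hence any $F$-derivation is literally an $i$-derivation. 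Consequently $\vdash_F \alpha_n$ yields $\vdash_i \alpha_n$, which directly contradicts Proposition \ref{Pndfc}. This contradiction shows that no fragment containing $\to$ has a finite semantics; in particular, this covers positive logic and full intuitionistic logic, leaving only the eight $\to$-free fragments for the following sections.

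The only point requiring a word of care — and the closest thing to an obstacle — is the passage from $\vdash_F$ to $\vdash_i$: one must make explicit that enlarging a fragment (and correspondingly its stock of rules) can only enlarge the derivability relation on the smaller language. With the rule-based presentation this is immediate, since the set of $F$-rules is a subset of the set of $i$-rules and the target formula $\alpha_n$ already belongs to $\mathfrak{F}_F \subseteq \mathfrak{F}$. Everything else is a direct citation of Propositions \ref{Pdfc} and \ref{Pndfc}.
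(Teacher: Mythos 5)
Your argument is correct and is essentially the paper's own proof: assume a finite semantics with $n$ values, invoke Proposition \ref{Pdfc} to get $\vdash_F \alpha_n$, and contradict Proposition \ref{Pndfc} via the inclusion $\vdash_F\ \subseteq\ \vdash_i$. The extra care you take about $n \geq 1$ and about the passage from $\vdash_F$ to $\vdash_i$ is sound but left implicit in the paper.
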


\begin{proof}
 Applying Proposition \ref{Pdfc}, the formulas $\alpha_n$ would be derivable in the fragment, 
 which cannot be the case, as by Proposition \ref{Pndfc}, $\nvdash_i \alpha_n$.    
\end{proof}

\section{Subfragments of $\{ \land, \lor \}$}

Due to the results of the previous section, 
the remaining fragments to be considered are the ones appearing in Figure \ref{B8}. 

\begin{figure} [ht]
\begin{center}

\begin{tikzpicture}

    \tikzstyle{every node}=[draw, circle, fill=white, minimum size=4pt, inner sep=0pt, label distance=0.5mm]


    \draw (0,0) node (0) [label= right : \small 	$\varnothing$]{};
    \draw (-1.4,1.4) node (1) [label = left :\small 	$\{\land \}$]{};
    \draw (0,1.4) node (2) [label = right :\small 	$\{ \lor \}$]{};
    \draw (1.4,1.4) node (3) [label = right :\small 	$ \{\neg \} $]{};
    \draw (-1.4,2.8) node (4) [label = left :\small 	$\{ {\land, \lor} \}$]{};
    \draw (0,2.8) node (5) [label = right :\small 	$\{ {\land, \neg} \}$]{};
    \draw (1.4,2.8) node (6) [label = right :\small 	$\{ {\lor, \neg} \}$]{};
    \draw (0,4.2) node (7) [label = right :\small 	$\{ {\land, \lor, \neg} \}$]{};
    \draw (0)--(1)--(4)--(2)--(0)--(3)--(6)--(2)
    	  (1)--(5)--(3) (4)--(7)--(6)  (5)--(7);
 
\end{tikzpicture}

\end{center}
\caption{\label{B8} The fragments without the conditional}
\end{figure}
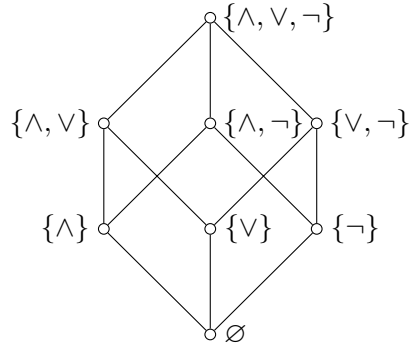

In this section, $\mathbf 2$ stands for any algebra  of the form $\langle \{0, 1\}; F \rangle$, 
where $F \subseteq \{\wedge, \vee \}$ and $\land$ and $\lor$ stand for the usual meet and join in a lattice or semilattice. 
In all cases, $D=\{ 1 \}$. 
It will be clear in the context which is the relevant $F$, which may be the empty set. 

\begin{prop} \label{Pcf}
 Let $\Gamma \cup \{ \varphi \} \subseteq \mathfrak{F}_{\{\land \}}$. 
 Then, $\Gamma \vdash_{\{\land \}} \varphi$ iff $\Gamma \vDash_\mathbf 2 \varphi$. 
\end{prop}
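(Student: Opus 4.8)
The plan is to prove the two implications separately: soundness, that $\Gamma \vdash_{\{\land\}} \varphi$ implies $\Gamma \vDash_{\mathbf 2} \varphi$, and completeness, the converse, the latter by contraposition using a canonical valuation.

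For soundness, I would check that the relation $\vDash_{\mathbf 2}$ is itself closed under the rules (R$_\in$), (R$_m$), (R$_t$) and (R$_\land$); since $\vdash_{\{\land\}}$ is the smallest relation closed under these rules, it is then contained in $\vDash_{\mathbf 2}$. Closure of $\vDash_{\mathbf 2}$ under (R$_\in$), (R$_m$) and (R$_t$) is immediate from the definition of semantic consequence. For (R$_\land$) the only point is that in $\mathbf 2$ one has $a \land b = 1$ iff $a = 1$ and $b = 1$, so that for every assignment $v$, $\bar v(\psi \land \chi) \in D$ iff $\bar v(\psi) \in D$ and $\bar v(\chi) \in D$; hence $\Gamma \vDash_{\mathbf 2} \psi \land \chi$ iff $\Gamma \vDash_{\mathbf 2} \psi$ and $\Gamma \vDash_{\mathbf 2} \chi$.

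For completeness I would argue by contraposition. Assume $\Gamma \nvdash_{\{\land\}} \varphi$ and define the canonical assignment $v : \Pi \to \{0, 1\}$ by $v(p) = 1$ if $\Gamma \vdash_{\{\land\}} p$ and $v(p) = 0$ otherwise. The key claim, to be proved by induction on the structure of $\chi \in \mathfrak{F}_{\{\land\}}$, is that $\bar v(\chi) = 1$ iff $\Gamma \vdash_{\{\land\}} \chi$. The base case $\chi = p$ is just the definition of $v$. If $\chi = \alpha \land \beta$, then $\bar v(\alpha \land \beta) = \bar v(\alpha) \land \bar v(\beta) = 1$ iff $\bar v(\alpha) = 1$ and $\bar v(\beta) = 1$, which by the induction hypothesis is equivalent to $\Gamma \vdash_{\{\land\}} \alpha$ and $\Gamma \vdash_{\{\land\}} \beta$, and this in turn is equivalent to $\Gamma \vdash_{\{\land\}} \alpha \land \beta$ by (R$_\land$). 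Granting the claim, every $\psi \in \Gamma$ satisfies $\Gamma \vdash_{\{\land\}} \psi$ by (R$_\in$), so $\bar v(\psi) = 1 \in D$, while $\bar v(\varphi) = 0 \notin D$ since $\Gamma \nvdash_{\{\land\}} \varphi$; thus $v$ witnesses $\Gamma \nvDash_{\mathbf 2} \varphi$, which is the contrapositive of what we want.

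The argument is essentially routine; the only things requiring a little care are the choice of the canonical valuation (exactly the letters derivable from $\Gamma$ are sent to $1$) and the observation that, in the conjunction fragment, (R$_\land$) makes the structural induction close as a biconditional at its single inductive step, with no further case analysis needed. Since the countermodel is produced explicitly, the statement holds even when $\Gamma$ is infinite, so no appeal to compactness is required.
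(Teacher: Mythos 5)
Your proof is correct, but it takes a somewhat different route from the paper's on both halves. For soundness, the paper simply notes that $\Gamma \vdash_{\{\land\}} \varphi$ implies $\Gamma \vdash_i \varphi$ and invokes soundness of intuitionistic logic over $\mathbf 2$ (a two-element Heyting algebra); you instead prove soundness from scratch by showing $\vDash_{\mathbf 2}$ is closed under each rule, which is self-contained but longer. For completeness, the paper exploits the normal form of $\{\land\}$-formulas: it writes $\varphi$ as $p_1 \land \cdots \land p_n$, uses (R$_\land$) to extract a single letter $p_i$ with $\Gamma \nvdash_{\{\land\}} p_i$, argues that $p_i$ then cannot occur in any formula of $\Gamma$, and defines the valuation sending only $p_i$ to $0$. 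You instead use the canonical (Lindenbaum-style) valuation sending exactly the derivable letters to $1$ and prove a truth lemma by structural induction, using the biconditional form of (R$_\land$) at the inductive step. Both countervaluations work; yours is the more standard and robust pattern (it avoids the paper's slightly compressed step ``$p_i$ is not a subformula of any formula in $\Gamma$,'' which itself needs a small argument via (R$_\in$) and (R$_\land$)), while the paper's is more concrete and tailored to the fact that a pure conjunction of letters is derivable iff each letter is. Your closing remark that no compactness is needed is also accurate.
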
 

\begin{proof}
 If $\Gamma \vdash_{\{\land \}} \varphi$, then $\Gamma \vdash_i \varphi$. 
 Using soundness of intuitionistic logic, it follows that $\Gamma \vDash_\mathbf 2 \varphi$. 
 On the other hand, suppose $\Gamma \nvdash_{\{\land \}} \varphi$, 
 that is, $\Gamma \nvdash_{\{\land \}} p_1 \land p_2 \land \cdots \land p_n$. 
 Then, by (R$_\wedge$), there is a letter $p_i$ such that $\Gamma \nvdash_{\{\land \}} p_i$. 
 So, again by (R$_\wedge$), $p_i$ is not a subformula of any formula in $\Gamma$. 
 Then, there exists the valuation $w$ such that $wp_i = 0$ and $wp=1$ for letters $p$ other that $p_i$. 
 So, $\Gamma \nvDash_ \mathbf 2 \varphi$.
\end{proof}

\begin{cor}
 Let $\Gamma \cup \{ \varphi \} \subseteq \mathfrak{F}_\varnothing$. 
 Then, $\Gamma \vdash_\varnothing \varphi$ iff $\Gamma \models_ \mathbf 2 \varphi$.
\end{cor}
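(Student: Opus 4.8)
The plan is to exploit the fact that the $\varnothing$-fragment has no connectives at all, so that $\mathfrak{F}_\varnothing$ coincides with the set $\Pi$ of propositional letters, and the only inference rules available for $\vdash_\varnothing$ are (R$_\in$), (R$_m$) and (R$_t$). First I would prove that, for $\Gamma \cup \{\varphi\} \subseteq \Pi$, one has $\Gamma \vdash_\varnothing \varphi$ iff $\varphi \in \Gamma$. The right-to-left direction is immediate from (R$_\in$). For the converse I would argue by induction on derivations: (R$_\in$) yields only members of $\Gamma$; (R$_m$) only enlarges the set of premisses while keeping the same conclusion; and for (R$_t$), if by the inductive hypothesis the conclusion of $\Delta, \varphi \vdash_\varnothing \psi$ satisfies $\psi \in \Delta \cup \{\varphi\}$, then either $\psi \in \Delta$, or $\psi = \varphi$ and, again by the inductive hypothesis applied to $\Gamma \vdash_\varnothing \varphi$, $\varphi \in \Gamma$; in both cases $\psi \in \Gamma \cup \Delta$.

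Next I would show that, for $\Gamma \cup \{\varphi\} \subseteq \Pi$, one has $\Gamma \vDash_{\mathbf 2} \varphi$ iff $\varphi \in \Gamma$. Right-to-left is again trivial. For the contrapositive of the other direction, assume $\varphi \notin \Gamma$; since $\varphi$ is a single letter, the valuation $w$ with $w\varphi = 0$ and $wp = 1$ for every letter $p \neq \varphi$ sends every element of $\Gamma$ into $D = \{1\}$ yet sends $\varphi$ out of $D$, so $\Gamma \nvDash_{\mathbf 2} \varphi$. This is precisely the valuation-building move used at the end of the proof of Proposition \ref{Pcf}, now with conjunction playing no role because all formulas involved are atomic.

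Combining the two equivalences gives $\Gamma \vdash_\varnothing \varphi$ iff $\varphi \in \Gamma$ iff $\Gamma \vDash_{\mathbf 2} \varphi$, which is the claim. Alternatively, one can read the result off Proposition \ref{Pcf} directly: since $\mathfrak{F}_\varnothing \subseteq \mathfrak{F}_{\{\land\}}$ and the conjunction rules are conservative over atomic sequents, $\Gamma \vdash_\varnothing \varphi$ iff $\Gamma \vdash_{\{\land\}} \varphi$, and the evaluation of atomic formulas does not depend on the presence of the operation $\land$, so $\Gamma \vDash_{\langle\{0,1\};\land\rangle} \varphi$ iff $\Gamma \vDash_{\langle\{0,1\};\varnothing\rangle} \varphi$. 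I expect the only point that needs a line of care is the bookkeeping in the induction showing that $\vdash_\varnothing$ is trivial, in particular the handling of the cut rule (R$_t$); everything else is immediate.
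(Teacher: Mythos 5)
Your proof is correct, and it follows essentially the route the paper intends: the paper gives no proof for this corollary, treating it as the degenerate case of Proposition \ref{Pcf} where $\varphi$ is a single letter and the countervaluation sends that letter to $0$ and all others to $1$ --- exactly your second half. Your explicit induction showing that $\vdash_\varnothing$ reduces to membership in $\Gamma$ is a careful (and welcome) filling-in of what the paper leaves implicit, but it is not a different argument.
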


\begin{prop} \label{Pcdf}
 Let $\Gamma \cup \{ \varphi \} \subseteq \mathfrak{F}_{\{\land, \lor\}}$. 
 Then, $\Gamma \vdash_{\{\land, \lor \}} \varphi$ iff $\Gamma \vDash_\mathbf 2 \varphi$. 
\end{prop}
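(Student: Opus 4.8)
The plan is to prove both directions. The direction $\Gamma\vdash_{\{\land,\lor\}}\varphi\Rightarrow\Gamma\vDash_{\mathbf 2}\varphi$ I would dispatch exactly as in Proposition~\ref{Pcf}: a derivation in the $\{\land,\lor\}$-fragment is a fortiori an intuitionistic derivation, and $\mathbf 2$ with $D=\{1\}$ is the $\{\land,\lor\}$-reduct of a Heyting algebra, so intuitionistic soundness applies. The content is the converse, which I would prove in contrapositive form: given $\Gamma\nvdash_{\{\land,\lor\}}\varphi$, produce a valuation into $\mathbf 2$ that satisfies $\Gamma$ but not $\varphi$. I would first reduce to finite $\Gamma$ (derivations are finite; the general case then follows by a routine compactness argument in $\{0,1\}^{\Pi}$), and then argue by induction on the total number of occurrences of $\lor$ among the premises, after normalising all formulas.

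The normalisation step: from (R$_\in$) together with (R$_\lor$) applied to $\varphi\lor\psi\vdash_{\{\land,\lor\}}\varphi\lor\psi$ one derives disjunction introduction, $\varphi\vdash_{\{\land,\lor\}}\varphi\lor\psi$ and $\psi\vdash_{\{\land,\lor\}}\varphi\lor\psi$; with (R$_\land$), (R$_m$) and (R$_t$) one then derives idempotence, commutativity and associativity of both connectives and distributivity of each over the other, all as mutual-derivability statements, and hence --- by a routine induction on complexity --- that every $\chi\in\mathfrak{F}_{\{\land,\lor\}}$ is mutually derivable with a disjunctive normal form $\chi^{\mathrm d}=\bigvee_{k\le m}\bigwedge_{p\in S_k}p$, with the $S_k$ finite nonempty sets of letters. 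Since $\mathbf 2$ is a distributive lattice, also $\bar w\chi=\bar w\chi^{\mathrm d}$ for every valuation $w$. So I may assume that $\varphi$ and every member of $\Gamma$ are already in this form.

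For the base case no $\lor$ occurs in $\Gamma$, i.e.\ every $\gamma\in\Gamma$ is a conjunction of letters. Let $S$ be the finite set of letters occurring in $\Gamma$; by (R$_\land$), (R$_\in$), (R$_m$), (R$_t$) the premise sets $\Gamma$ and $S$ derive the same formulas, so $S\nvdash\varphi$. Writing $\varphi=\bigvee_{k\le m}\bigwedge_{p\in S_k}p$, if $S_k\subseteq S$ for some $k$ then $S$ derives $\bigwedge_{p\in S_k}p$ (by (R$_\in$) and (R$_\land$)) and hence $\varphi$ (by disjunction introduction); therefore $S_k\not\subseteq S$ for all $k$. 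The valuation $w_S$ given by $w_S p=1\iff p\in S$ then makes every letter of $S$ --- hence every $\gamma\in\Gamma$ --- true, while $\bar w_S\varphi=\bigvee_k\bigwedge_{p\in S_k}\bar w_S p=0$, which is what we want. For the inductive step some $\gamma\in\Gamma$ is a nontrivial disjunction $\gamma=\gamma'\lor\gamma''$; put $\Gamma=\Gamma_0\cup\{\gamma\}$. By (R$_\lor$), $\Gamma\nvdash\varphi$ forces $\Gamma_0,\gamma'\nvdash\varphi$ or $\Gamma_0,\gamma''\nvdash\varphi$, say the former; this premise set has strictly fewer occurrences of $\lor$, so by the induction hypothesis there is $w$ with $\bar w\psi=1$ for all $\psi\in\Gamma_0$, $\bar w\gamma'=1$ and $\bar w\varphi=0$. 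Then $\bar w\gamma=\bar w\gamma'\lor\bar w\gamma''=1$, so $w$ refutes $\Gamma\vDash_{\mathbf 2}\varphi$, closing the induction.

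The main obstacle is the normalisation paragraph: it requires checking that the whole package of distributive-lattice identities, plus disjunction introduction, really is derivable from the bare rule schemata (R$_\in$)--(R$_\lor$), and then that these identities suffice to push an arbitrary formula into disjunctive normal form with provable equivalence. All of this is elementary, but it is where the actual work sits; once it is available, the two-case induction above is essentially bookkeeping, and one need only be slightly careful with the degenerate cases $\Gamma=\varnothing$ and $m=0$, which the nonemptiness of the $S_k$ handles uniformly (no $\{\land,\lor\}$-formula is a theorem, matching $\varnothing\nvDash_{\mathbf 2}\varphi$).
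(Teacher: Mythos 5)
Your proof is correct, and it rests on the same two pillars as the paper's: a normal form theorem for $\{\land,\lor\}$-formulas and the rule (R$_\lor$) as the device for discharging disjunctions occurring in the premises. But the organization is dual. The paper converts everything to \emph{conjunctive} normal form, uses (R$_\land$) to reduce the goal to a single clause $\chi=q_1\lor\cdots\lor q_n$, and then observes---again via (R$_\lor$) together with disjunction introduction---that each premise clause must contain a letter outside $\{q_1,\dots,q_n\}$; the separating valuation (those $q_i$ to $0$, every other letter to $1$) is then written down in one stroke, and since the condition is checked clause by clause this works even for infinite $\Gamma$ with no compactness step. You instead use \emph{disjunctive} normal form and peel off the premise disjunctions one at a time by induction on the number of $\lor$-occurrences, constructing the valuation only at the $\lor$-free base case from the set $S$ of letters occurring in the premises. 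Your route makes the role of (R$_\lor$) completely explicit and is more careful about the degenerate cases ($\Gamma=\varnothing$, the nonemptiness of the $S_k$, the observation that the fragment has no theorems), but it pays two costs the paper's argument avoids: the induction requires $\Gamma$ to be finite, forcing the preliminary compactness reduction, and the full normalization package (interderivability of every formula with its normal form, derived from the bare rule schemata) must be established in the same detail either way---you are right that this is where the real work sits, and the paper simply cites "the conjunctive normal form theorem" for it. Both arguments are sound; neither has a gap beyond the routine lemmas each defers.
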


\begin{proof}
 If $\Gamma \vdash_{\{\land, \lor \}} \varphi$, then $\Gamma \vdash_i \varphi$. 
 Using soundness of intuitionistic logic, it follows that $\Gamma \vDash_\mathbf 2 \varphi$. 
 On the other hand, suppose $\Gamma \nvdash_{\{\land, \lor \}} \varphi$. 
 Then using the conjunctive normal form theorem, 
 it follows that $\varphi$ and every formula in $\Gamma$ may be seen as a conjunction of disjunction of letters. 
 Then, by (R$_\land$), $\Gamma \nvdash_{\{\land, \lor \}} \chi$, where $\chi = q_1 \lor \cdots \lor q_n$. 
 Also, as every formula in $\Gamma$ is a conjunction (of disjunctions), 
 and to have formulas $\alpha$ and $\beta$ as different premisses is equivalent to having $\alpha \land \beta$ as only premiss, 
 then we might as well consider $\Gamma$ to be a set of disjunctions and call it $\Delta$.  
 Now, by (R$_\lor$), it follows that 
 every disjunction in $\Delta$ has a letter that does not belong to the set $\{q_1, \dots q_n \}$. 
 Consequently, 
 there exists the valuation $w$ such that $wq_i = 0$, for all $1 \leq i \leq n$ and $wp = 1$ for letters $p$ other than the $q_i$. 
 So, every formula in $\Delta$ will have value $1$. 
 So, $\Gamma \nvDash_2 \varphi$. 
\end{proof}

\begin{cor}
 Let $\Gamma \cup \{ \varphi \} \subseteq \mathfrak{F}_{\{\lor\}}$. 
 Then, $\Gamma \vdash_{\{\lor \}} \varphi$ iff $\Gamma \vDash_\mathbf 2 \varphi$. 
\end{cor}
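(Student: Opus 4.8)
The plan is to mirror the proof of Proposition \ref{Pcdf}, specialized to the case in which no conjunction occurs. For the soundness direction, if $\Gamma \vdash_{\{\lor\}} \varphi$ then in particular $\Gamma \vdash_i \varphi$, since the rule governing $\lor$ together with the structural rules (R$_\in$), (R$_m$), (R$_t$) are among the intuitionistic ones; soundness of intuitionistic logic with respect to $\mathbf 2$ then gives $\Gamma \vDash_\mathbf 2 \varphi$.

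For the converse, I would first establish that every formula of $\mathfrak{F}_{\{\lor\}}$ is, with respect to $\vdash_{\{\lor\}}$, interderivable with a disjunction of letters. This uses only (R$_\lor$), (R$_\in$), (R$_m$) and (R$_t$): from (R$_\in$) and the left-to-right direction of (R$_\lor$) one reads off $\lor$-introduction, namely $\alpha \vdash_{\{\lor\}} \alpha \lor \beta$ and $\beta \vdash_{\{\lor\}} \alpha \lor \beta$, and from these together with the $\lor$-elimination direction of (R$_\lor$) one checks that $\lor$ is, up to interderivability, idempotent, commutative and associative. Hence, just as in Proposition \ref{Pcdf}, I may assume $\varphi = q_1 \lor \cdots \lor q_n$ and that $\Gamma$ is a set $\Delta$ of disjunctions of letters, with $\Gamma \nvdash_{\{\lor\}} \varphi$ iff $\Delta \nvdash_{\{\lor\}} q_1 \lor \cdots \lor q_n$.

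Now assume $\Delta \nvdash_{\{\lor\}} q_1 \lor \cdots \lor q_n$. Then every disjunction $d \in \Delta$ must contain a letter outside $\{q_1, \dots, q_n\}$: otherwise all disjuncts of $d$ would lie in $\{q_1, \dots, q_n\}$, so by $\lor$-introduction each of them, and hence $d$ itself by (R$_\lor$), would derive $q_1 \lor \cdots \lor q_n$, whence $\Delta$ would too by (R$_m$), a contradiction. Choosing the valuation $w$ with $w q_i = 0$ for $1 \le i \le n$ and $w p = 1$ for every other letter $p$, every formula of $\Delta$ then gets value $1$ while $q_1 \lor \cdots \lor q_n$ gets value $0$, so $\Delta \nvDash_\mathbf 2 q_1 \lor \cdots \lor q_n$, and therefore $\Gamma \nvDash_\mathbf 2 \varphi$.

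The only point requiring a little care is the reduction to a disjunction of letters inside the bare $\{\lor\}$-calculus, that is, checking that (R$_\lor$) really does yield $\lor$-introduction and the idempotence, commutativity and associativity interderivabilities; but this is routine, and is in fact exactly the $\land$-free content of the argument already given for Proposition \ref{Pcdf}. Alternatively one could try to deduce the corollary from Proposition \ref{Pcdf} via conservativity of $\vdash_{\{\land, \lor\}}$ over $\vdash_{\{\lor\}}$ on $\mathfrak{F}_{\{\lor\}}$, but that conservativity is most naturally obtained through the very same semantic argument, so nothing is gained by that route.
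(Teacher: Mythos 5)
Your proof is correct and follows essentially the same route as the paper, which simply observes that the argument for Proposition \ref{Pcdf} already reduces everything to sets of disjunctions of letters; your added care in checking that the flattening of $\lor$-formulas (idempotence, commutativity, associativity up to interderivability) is available in the bare $\{\lor\}$-calculus is exactly the point the paper leaves implicit.
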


\begin{proof}
 The proof for the fragment $\{ \land, \lor \}$ was reduced to having only disjunctions. 
\end{proof}

\section{The disjunction-negation fragment}

Due to the results of the previous sections, 
the remaining fragments to be considered are the ones appearing in Figure \ref{B4}. 

\begin{figure} [ht]
\begin{center}

\begin{tikzpicture}

    \tikzstyle{every node}=[draw, circle, fill=white, minimum size=4pt, inner sep=0pt, label distance=1mm]


    \draw (0,0) node (0) [label= right :  \small$\{ \neg\}$]{};
    \draw (-1,1) node (1) [label = left : \small$\{ {\land, \neg} \}$]{};
    \draw (0,2) node (2) [label = right : \small$\{ {\land, \vee, \neg} \}$]{};
    \draw (1,1) node (3) [label = right : \small$ \{ {\vee, \neg} \} $]{};
    \draw (0)--(1)--(2)--(3)--(0);
 
\end{tikzpicture}

\end{center}
\caption{\label{B4} The four remaining fragments}
\end{figure}
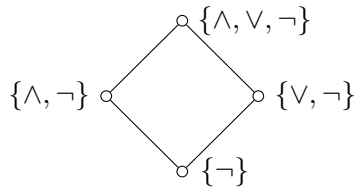

We will use the following lemmas. 

\begin{lem} \label{Ldi}
 Let $F$ be a fragment with $\lor$. 
 Then, $\varphi \vdash_F \varphi \lor \psi$ and $\psi \vdash_F \varphi \lor \psi$. 
\end{lem}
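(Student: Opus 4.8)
The key observation is that the rule (R$_\lor$) is stated as a biconditional, so it can be exploited in the ``easy'' direction to extract disjunction-introduction from a trivial premiss. Concretely, I would start from the instance of (R$_\in$) that gives $\varphi \lor \psi \vdash_F \varphi \lor \psi$; note this is legitimate since $\varphi, \psi \in \mathfrak{F}_F$ and $\lor \in F$, so $\varphi \lor \psi \in \mathfrak{F}_F$ as well.

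Next I would apply (R$_\lor$) with $\Gamma = \varnothing$ and $\chi = \varphi \lor \psi$: the rule tells us that $\varphi \lor \psi \vdash_F \varphi \lor \psi$ holds if and only if both $\varphi \vdash_F \varphi \lor \psi$ and $\psi \vdash_F \varphi \lor \psi$ hold. Since the left-hand side of this equivalence has just been established, the right-hand side follows, which is exactly the pair of claims in the statement.

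The whole argument is a single application of (R$_\lor$) on top of (R$_\in$), so there is essentially no obstacle here; the only point requiring a moment's attention is the ``backward'' reading of the biconditional rule and checking that $\chi = \varphi \lor \psi$ is a legitimate instance in the fragment $F$. No structural rules beyond (R$_\in$) are needed, and the proof is uniform over all fragments $F$ containing $\lor$.

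\begin{proof}
Since $\lor \in F$ and $\varphi, \psi \in \mathfrak{F}_F$, we have $\varphi \lor \psi \in \mathfrak{F}_F$, so by (R$_\in$) we obtain $\varphi \lor \psi \vdash_F \varphi \lor \psi$. Applying (R$_\lor$) with $\Gamma = \varnothing$ and $\chi = \varphi \lor \psi$, this is equivalent to the conjunction of $\varphi \vdash_F \varphi \lor \psi$ and $\psi \vdash_F \varphi \lor \psi$. Hence both hold.
\end{proof}
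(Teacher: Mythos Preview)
Your proof is correct and follows essentially the same approach as the paper: start from $\varphi \lor \psi \vdash_F \varphi \lor \psi$ via (R$_\in$), then apply (R$_\lor$) with $\Gamma = \varnothing$ in the backward direction to obtain both conclusions. Your version is slightly more explicit in checking that $\varphi \lor \psi \in \mathfrak{F}_F$, but otherwise the arguments are identical.
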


\begin{proof}
 Using (R$_\in$), we get $\varphi \vee \psi \vdash_F \varphi \vee \psi$. 
 Then, using (R$_\vee$) with $\Gamma = \varnothing$, 
 we get both $\varphi \vdash_F \varphi \lor \psi$ and $\psi \vdash_F \varphi \lor \psi$. 
\end{proof}

\begin{lem} \label{Ldn}
 Let $F$ be a fragment with $\lor$ and $\neg$. 
 Then, (i) if $\varphi \vdash_F \psi$, then $\neg \psi \vdash_F \neg \varphi$ and 
 (ii) $\vdash_F \neg \neg (\varphi \vee \neg \varphi)$. 
\end{lem}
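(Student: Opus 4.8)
The plan is to prove (i) first, then bootstrap (ii) out of it using Lemma \ref{Ldi} and the cut rule.

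For (i), assume $\varphi \vdash_F \psi$. By (R$_\in$) we have $\neg\psi \vdash_F \neg\psi$, and by monotonicity (R$_m$) both $\neg\psi, \varphi \vdash_F \neg\psi$ and, from the assumption, $\neg\psi, \varphi \vdash_F \psi$. Feeding these two derivations to (R$_{\neg I}$) with $\Gamma = \{\neg\psi\}$ (the schematic $\varphi$ of the rule being our $\varphi$, the schematic $\psi$ of the rule being our $\psi$) yields exactly $\neg\psi \vdash_F \neg\varphi$.

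For (ii), write $\chi := \varphi \lor \neg\varphi$. The idea is to apply (R$_{\neg I}$) with $\Gamma = \varnothing$ and with $\neg\chi$ playing the role of the schematic $\varphi$ and $\chi$ the role of the schematic $\psi$; then it suffices to derive $\neg\chi \vdash_F \chi$ and $\neg\chi \vdash_F \neg\chi$. The second is immediate by (R$_\in$). For the first: Lemma \ref{Ldi} gives $\varphi \vdash_F \chi$, so part (i) gives $\neg\chi \vdash_F \neg\varphi$; Lemma \ref{Ldi} again gives $\neg\varphi \vdash_F \chi$; and the cut rule (R$_t$) combines these into $\neg\chi \vdash_F \chi$. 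With both premises of (R$_{\neg I}$) now available we conclude $\vdash_F \neg\neg\chi$, that is, $\vdash_F \neg\neg(\varphi\lor\neg\varphi)$.

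I do not expect a serious obstacle here: everything is bookkeeping with the structural rules. The one step worth flagging is the middle of (ii) — noticing that $\neg\chi$ already proves $\chi$ — which is not visible until one combines part (i) with \emph{both} disjunction-introduction facts of Lemma \ref{Ldi} through cut. I would also take care to spell out, at each use of (R$_{\neg I}$), which formulas occupy its three schematic slots, since misreading that rule is the easiest way to derail the argument.
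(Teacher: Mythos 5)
Your proof is correct and follows essentially the same route as the paper's: part (i) via monotonicity, (R$_\in$), and (R$_{\neg I}$) with $\Gamma=\{\neg\psi\}$, and part (ii) by deriving $\neg(\varphi\lor\neg\varphi)\vdash_F \varphi\lor\neg\varphi$ from Lemma \ref{Ldi}, part (i), and cut, then closing with (R$_{\neg I}$). Your explicit bookkeeping of which formulas fill the schematic slots of (R$_{\neg I}$) is a helpful touch but does not change the argument.
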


\begin{proof}
  (i) Suppose $\varphi \vdash_F \psi$. 
  Then, by monotonicity, we have $\varphi, \neg \psi \vdash_F \psi$. 
  By (R$_\in$) we also have $\varphi, \neg \psi \vdash_F \neg \psi$. 
  Then, using (R$_{\neg I}$), it follows that $\neg \psi \vdash_F \neg \varphi$. 
  (ii) Using Lemma \ref{Ldi}, we have $\varphi \vdash_F \varphi \vee \neg \varphi$. 
  Then, by part (i), $\neg (\varphi \vee \neg \varphi) \vdash_F \neg \varphi$. 
  Using Lemma \ref{Ldi} again, we have $\neg \varphi \vdash_F \varphi \vee \neg \varphi$. 
  So, using the cut rule, $\neg (\varphi \vee \neg \varphi) \vdash_F \varphi \vee \neg \varphi$. 
  Now, by (R$_\in$), we also have $\neg (\varphi \vee \neg \varphi) \vdash_F \neg (\varphi \vee \neg \varphi)$. 
  Finally, using (R$_{\neg I}$), we get $\vdash_F \neg \neg (\varphi \vee \neg \varphi)$. 
\end{proof}

\begin{rem}
 Recall that \emph{Tertium non datur} does not hold in intuitionistic logic. 
However, as seen in part (ii) of the last Lemma, its double negation holds in any fragment with $\vee$ and $\neg$. 
\end{rem}

\noindent On the other hand, intuitionistic logic enjoys the Disjunction Property, 
which does not hold for classical logic.  

\begin{lem} \label{Ldp}
 Let $\varphi, \psi \in \mathfrak{F}$.  
 If $\vdash_i \varphi \vee \psi$, then $\vdash_i \varphi$ or $\vdash_i \psi$. 
\end{lem}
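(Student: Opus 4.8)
The plan is to argue by contraposition: assuming $\nvdash_i \varphi$ and $\nvdash_i \psi$, I would produce a single semantic countermodel refuting $\varphi \vee \psi$, and then conclude $\nvdash_i \varphi \vee \psi$ by soundness. The convenient setting is Kripke semantics for intuitionistic logic, equivalently the Heyting algebras of up-sets of posets; I would use soundness, and — crucially — completeness, with respect to this semantics.

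First I would apply completeness to $\nvdash_i \varphi$ to obtain a rooted Kripke model $\mathfrak{M}_1$ with root $r_1$ at which $\varphi$ is not forced, and likewise a rooted model $\mathfrak{M}_2$ with root $r_2$ not forcing $\psi$, with the two underlying posets taken disjoint. Then I would build the glued model $\mathfrak{M}$: its worlds are those of $\mathfrak{M}_1$, those of $\mathfrak{M}_2$, and one new world $r$; the order is the union of the two orders together with $r < w$ for every world $w$ of $\mathfrak{M}_1$ and of $\mathfrak{M}_2$; and each letter is valued as in $\mathfrak{M}_1$ on the $\mathfrak{M}_1$-part, as in $\mathfrak{M}_2$ on the $\mathfrak{M}_2$-part, and declared true at $r$ exactly when it is true at both $r_1$ and $r_2$ (so every valuation remains an up-set). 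The heart of the argument is that each $\mathfrak{M}_i$ is an upward-closed (``generated'') submodel of $\mathfrak{M}$, whence forcing is absolute: for every world $w$ of $\mathfrak{M}_i$ and every formula $\chi$, $\chi$ is forced at $w$ in $\mathfrak{M}$ iff it is forced at $w$ in $\mathfrak{M}_i$. This is a routine induction on $\chi$, the only case needing upward-closedness being $\to$ (and $\neg$, were $\bot$ present), since all successors of $w$ already lie in $\mathfrak{M}_i$. Granting this, $\varphi$ fails at $r_1$ and $\psi$ fails at $r_2$ in $\mathfrak{M}$, so by persistence along $r < r_1$ and $r < r_2$ both $\varphi$ and $\psi$, and hence $\varphi \vee \psi$, fail at $r$; by soundness, $\nvdash_i \varphi \vee \psi$. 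Dually, and without mentioning Kripke frames at all, $\mathfrak{M}$ is the Heyting algebra obtained by adjoining a fresh top element to the product of the up-set algebras of $\mathfrak{M}_1$ and $\mathfrak{M}_2$; in such an algebra the top is join-irreducible, which is precisely what makes $\varphi \vee \psi$ take a non-top value once $\varphi$ and $\psi$ do, so soundness against this single algebra gives the conclusion.

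The main obstacle is not any step of the gluing itself, which is elementary, but that the argument reaches outside the toolkit built in the paper: it relies on completeness of intuitionistic logic with respect to Kripke (equivalently Heyting-algebra) semantics, which has not been established here. In the didactic spirit of the note one would therefore either cite completeness as a standard fact, prepend a short treatment of it, or replace the whole argument by a proof-theoretic one — for instance, invoke cut-elimination for the intuitionistic sequent calculus and observe that a cut-free derivation of $\Rightarrow \varphi \vee \psi$ can only end with a disjunction-introduction and so already derives $\varphi$ or $\psi$; or, equivalently, invoke normalisation for intuitionistic natural deduction and read off the last rule of a normal assumption-free derivation. Whichever route is chosen, the substantive input — a gluing construction on models, or a normal-form theorem — is a known result that I would quote rather than reprove.
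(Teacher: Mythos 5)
Your proposal is correct and is essentially the paper's own argument: the paper likewise proceeds by contraposition, takes two Heyting-algebra countermodels for $\varphi$ and $\psi$ (implicitly via completeness), forms their direct product, and adjoins a new top element --- exactly the algebraic dual you describe, with join-irreducibility of the new top doing the work. Your Kripke-gluing presentation is just the frame-theoretic rendering of the same construction, and like the paper you correctly flag that the substantive external input (completeness, or a normal-form theorem) is quoted rather than proved.
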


\begin{proof}
 An algebraic proof runs as follows. 
 If neither $\vdash_i \varphi$ nor $\vdash_i \psi$ hold, 
 then there are Heyting algebras $\mathbf{H_1}$, $\mathbf{H_2}$ and valuations $v_1$, $v_2$ such that 
 $v_1 \varphi \neq 1_{\mathbf{H_1}}$ and $v_2 \psi \neq 1_{\mathbf{H_2}}$. 
 Now, take the direct product $\mathbf{H_1} \times \mathbf{H_2}$ and 
 add an element which is greater than any element of the universe of the given product. 
 Then, the resulting algebra with the natural valuation will prove that 
 it is not the case that $\vdash_i \varphi \vee \psi$. 
 For details, the reader may see \cite{S}.  
 For other proofs, see \cite[Exercise 2.6.7 or sections 5.6 to 5.10] {TvD}. 
\end{proof}

\begin{prop} \label{Pdfdn}
 Let $F$ be such that \{$\lor, \neg \} \subseteq F$. 
 Let us suppose that $F$ has a finite semantics, say with $n \geq 1$ values. 
 Then, the formulas of the following form are derivable in $F$: 
 
 $$\alpha_n = \bigvee_{1 \leq i < j \leq n+1} \neg \neg (\neg p_i \lor p_j).$$
 
\end{prop}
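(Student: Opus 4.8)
The plan is to run G\"odel's argument once more, in the shape already used for Proposition~\ref{Pdfc}, but now with honest disjunction (under a double negation) playing the part of the derived connective $\dot\lor$. So the role of the derivability of $p_j\to p_j$ (Lemma~\ref{Lmp}(ii)) will be taken by the derivability of $\neg\neg(\neg p_j\vee p_j)$, and the role of ``propagation through $\dot\lor$'' (Lemma~\ref{Ldv}(iii)) by Lemma~\ref{Ldi}. The skeleton is then: assume a finite semantics with $n$ values; evaluate $\alpha_n$, which mentions $n+1$ letters, under an arbitrary homomorphism $w$; use the pigeonhole principle to collapse two of those letters; note that the collapsed formula $\beta_n$ is derivable and has the same $w$-value as $\alpha_n$; and finally use the biconditional ``$\vdash_F$ iff $\vDash$'' in both directions to pass from $\vdash_F\beta_n$ to $\vdash_F\alpha_n$.

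Before the main argument I would record two elementary facts, valid in any fragment $F$ containing both $\vee$ and $\neg$. First: if $\vdash_F\chi$ and $\chi$ occurs as a disjunct of a disjunction $\delta$ (at any depth), then $\vdash_F\delta$. This is a short structural induction on $\delta$: in the inductive step $\delta=\delta_1\vee\delta_2$ with $\chi$ a disjunct of, say, $\delta_1$, so $\vdash_F\delta_1$, and then Lemma~\ref{Ldi} gives $\delta_1\vdash_F\delta_1\vee\delta_2$, whence $\vdash_F\delta$ by the cut rule~(R$_t$). Second: $\vdash_F\neg\neg(\neg\varphi\vee\varphi)$ for every $\varphi\in\mathfrak{F}_F$. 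Here one first observes that $\vee$ is commutative up to interderivability (immediate from Lemma~\ref{Ldi} and~(R$_\lor$)), so Lemma~\ref{Ldn}(i) yields $\neg\neg(\varphi\vee\neg\varphi)\vdash_F\neg\neg(\neg\varphi\vee\varphi)$; cutting this against Lemma~\ref{Ldn}(ii) gives the claim.

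For the proposition itself I would argue as follows. Suppose $F$ has a finite semantics: there are an algebra $\mathfrak{A}_F$ with an $n$-element universe $V$ and a set $D\subseteq V$ such that
$$\textrm{(C)\ \ }\Gamma\vdash_F\varphi\ \textrm{iff}\ \Gamma\vDash_{\mathfrak{A}_F,D}\varphi$$
for all $\Gamma\cup\{\varphi\}\subseteq\mathfrak{F}_F$. Let $w\colon\mathfrak{F}_F\to V$ be an arbitrary homomorphism. Since $\alpha_n$ mentions the $n+1$ distinct letters $p_1,\dots,p_{n+1}$ while $V$ has only $n$ elements, there are indices $i<j$ with $w(p_i)=w(p_j)$. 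Put $\beta_n=\alpha_n[p_i/p_j]$, i.e.\ replace every occurrence of $p_i$ in $\alpha_n$ by $p_j$. Because $w$ is a homomorphism and $w(p_i)=w(p_j)$, we have $w(\beta_n)=w(\alpha_n)$; and the disjunct of $\alpha_n$ indexed by the pair $(i,j)$, namely $\neg\neg(\neg p_i\vee p_j)$, becomes $\neg\neg(\neg p_j\vee p_j)$ in $\beta_n$, which is derivable by the second fact above. Hence $\vdash_F\beta_n$ by the first fact. Applying~(C), $\vDash_{\mathfrak{A}_F,D}\beta_n$, so in particular $w(\beta_n)\in D$, and therefore $w(\alpha_n)\in D$. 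As $w$ was arbitrary, $\vDash_{\mathfrak{A}_F,D}\alpha_n$, and one last use of~(C) gives $\vdash_F\alpha_n$.

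I do not expect a genuine obstacle: the argument is structurally the same as Proposition~\ref{Pdfc}. The only points calling for a little care are bookkeeping ones — making sure the collapsed disjunct $\neg\neg(\neg p_j\vee p_j)$ really is an instance of the order-sensitive statement of Lemma~\ref{Ldn}(ii) (which is exactly why I would first note commutativity of $\vee$ up to interderivability), and the routine observation that replacing one propositional letter by another of the same $w$-value leaves every formula's $w$-value unchanged, which is immediate since $w$ is a homomorphism.
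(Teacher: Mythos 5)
Your proof is correct and follows essentially the same route as the paper's: pigeonhole on the $n+1$ letters against the $n$ values, collapse $p_i$ to $p_j$, observe that the resulting disjunct $\neg\neg(\neg p_j\vee p_j)$ is derivable, propagate derivability up the disjunction, and use the biconditional (C) in both directions. The two auxiliary facts you isolate (propagation of a derivable disjunct through $\vee$ via Lemma \ref{Ldi} and the cut rule, and the commutation needed to pass from Lemma \ref{Ldn}(ii)'s $\neg\neg(\varphi\vee\neg\varphi)$ to $\neg\neg(\neg\varphi\vee\varphi)$) are precisely the steps the paper leaves implicit in its ``Now, it holds that $\vdash_F \neg\neg(\neg p_j\lor p_j)$. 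Consequently, $\vdash_F\beta_n$.''
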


\begin{proof}
 Let us suppose that $F$ is a fragment with $\lor$ and $\neg$ that has a semantics with $n$ values, that is, 
 there exists an algebra with universe $V$ such that $|V|=n$ and $D \subseteq V$ such that 
 for every $\Gamma \cup \{ \varphi \} \in \mathfrak{F}_F$ it holds that 
 $$\textrm{(C)\ \ }\Gamma \vdash_F \varphi\ \textrm{iff} \ \Gamma \vDash_{\mathfrak{A}_F, D} \varphi.$$  
 Let us take a valuation $w:\mathfrak{F}_F \to V$ and let us consider $w(\alpha_n)$. 
 As there are $n+1$ propositional letters in $\alpha_n$, but only $n$ values, 
 there must be letters $p_i$, $p_j$ such that $w(p_i) = w(p_j)$. 
 Now, let us consider the formula $\beta_n = \alpha_n[p_i/p_j]$.  
 It should be clear that $w(\beta_n) = w(\alpha_n)$. 
 Now, it holds that $\vdash_F \neg \neg (\neg p_j \lor p_j)$. 
 Consequently, $\vdash_F \beta_n$. 
 Then, by (C), $\vDash_{\mathfrak{A}_F, D} \beta_n$. 
 So, $\vDash_{\mathfrak{A}_F, D} \alpha_n$. 
 So, using (C) in the other direction, $\vdash_F \alpha_n$.
\end{proof}

\begin{prop} \label{Pndfdn}
 The formulas of the form given in Proposition \ref{Pdfdn} are not intuitionistically derivable. 
\end{prop}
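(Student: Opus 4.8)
The plan is to combine the Disjunction Property of intuitionistic logic (Lemma~\ref{Ldp}) with a one-line soundness argument. First I would observe that, by its very construction, $\alpha_n$ is a disjunction of the $\binom{n+1}{2}$ formulas $\neg\neg(\neg p_i \lor p_j)$ with $1 \leq i < j \leq n+1$. Assuming towards a contradiction that $\vdash_i \alpha_n$ and applying Lemma~\ref{Ldp} repeatedly --- a routine induction on the number of disjuncts, since $\vdash_i \chi \lor \theta$ yields $\vdash_i \chi$ or $\vdash_i \theta$ --- I would conclude that $\vdash_i \neg\neg(\neg p_i \lor p_j)$ for some fixed pair $i < j$. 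When $n = 1$ there is a single disjunct and this step is vacuous.

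The second step is to refute that single formula by soundness. Since $i < j$, the letters $p_i$ and $p_j$ are distinct, so a valuation may be chosen freely on them. Taking the two-element chain as a Heyting algebra (with $\neg 0 = 1$ and $\neg 1 = 0$) and any valuation $w$ with $wp_i = 1$ and $wp_j = 0$, a direct computation gives $\bar w(\neg\neg(\neg p_i \lor p_j)) = \neg\neg(\neg 1 \lor 0) = \neg\neg 0 = 0 \neq 1$. Hence, by soundness of intuitionistic logic over Heyting algebras, $\nvdash_i \neg\neg(\neg p_i \lor p_j)$, contradicting the conclusion of the first step; therefore $\nvdash_i \alpha_n$ for every $n \geq 1$.

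There is no serious obstacle here: the whole argument rests on the Disjunction Property, which is precisely what prevents it from being replaced by the kind of direct chain refutation used in Section~2, since in a chain the join of the disjuncts is their maximum and one checks that, once there are at least three letters, these disjuncts cannot all be strictly below the top at once. The only minor points needing care are the explicit formulation of the iterated application of Lemma~\ref{Ldp} to a multi-way disjunction and the mention of the degenerate case $n = 1$.
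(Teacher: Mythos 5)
Your proposal is correct and follows essentially the same route as the paper: invoke the Disjunction Property (Lemma~\ref{Ldp}), reduce to a single disjunct $\neg\neg(\neg p_i \lor p_j)$, and refute that disjunct classically via the two-element Boolean algebra. You are merely more explicit than the paper about the iterated application of the Disjunction Property and the actual truth-value computation, which is a harmless (indeed welcome) elaboration.
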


\begin{proof}
 For every $i$, $j$, $i \neq j$, $\neg \neg (\neg p_i \lor p_j)$ is not even classically derivable. 
 So, by Lemma \ref{Ldp}, it follows that $\nvdash_i \alpha_n$. 
\end{proof}

\begin{teo}
 Fragments containing $\lor$ and $\neg$ do not have a finite semantics.  
\end{teo}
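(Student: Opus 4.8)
The plan is to reuse, word for word, the contradiction scheme already employed for the conditional fragments at the end of Section 2, now fed by Propositions \ref{Pdfdn} and \ref{Pndfdn} instead of Propositions \ref{Pdfc} and \ref{Pndfc}. So I would argue by contradiction: suppose some fragment $F$ with $\{\lor,\neg\}\subseteq F$ does have a finite semantics. By Definition \ref{Dfs} this means there is a finite algebra $\mathfrak{A}_F$ with universe $V$, $|V|=n$ for some $n\geq 1$, and $D\subseteq V$, such that $\Gamma\vdash_F\varphi$ iff $\Gamma\vDash_{\mathfrak{A}_F,D}\varphi$ for all $\Gamma\cup\{\varphi\}\subseteq\mathfrak{F}_F$.

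Then Proposition \ref{Pdfdn} applies and yields $\vdash_F\alpha_n$, where $\alpha_n=\bigvee_{1\leq i<j\leq n+1}\neg\neg(\neg p_i\lor p_j)$. Since every rule available in $\vdash_F$ is also a rule of $\vdash_i$, derivability in the fragment entails intuitionistic derivability, so $\vdash_i\alpha_n$. But Proposition \ref{Pndfdn} states exactly that $\nvdash_i\alpha_n$, a contradiction. Hence no fragment $F$ with $\{\lor,\neg\}\subseteq F$ can have a finite semantics.

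There is essentially no obstacle left at this stage: all the real work is in the two preceding propositions — the G\"odel-style pigeonhole argument of Proposition \ref{Pdfdn} and the appeal to the Disjunction Property in Proposition \ref{Pndfdn}. The only point requiring a moment's care is uniformity over the fragments in question: one must note that the chosen witness $\alpha_n$ belongs to $\mathfrak{F}_F$ for every such $F$ (it is built from $\lor$ and $\neg$ only), and that the passage from $\vdash_F$ to $\vdash_i$ is legitimate for each such $F$ (enlarging the set of connectives only enlarges the set of rules). Together with Figure \ref{B4}, this argument leaves only the fragments $\{\neg\}$ and $\{\land,\neg\}$ open, to be handled in Section 5.
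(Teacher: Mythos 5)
Your proof is correct and follows exactly the paper's own argument: assume a finite semantics with $n$ values, invoke Proposition \ref{Pdfdn} to get $\vdash_F\alpha_n$, pass to $\vdash_i\alpha_n$, and contradict Proposition \ref{Pndfdn}. The extra remarks on uniformity over fragments are sensible but add nothing beyond what the paper leaves implicit.
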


\begin{proof}
 Applying Proposition \ref{Pdfdn}, the formulas of the given form would be derivable in the fragment, 
 which cannot be the case, as they are not intuitionistically derivable, as stated in Proposition \ref{Pndfdn}.    
\end{proof}

\section{The negation and conjunction-negation fragments}

We only need to consider fragments $\{\neg \}$ and $\{\land, \neg \}$. 
In the Introduction we stated that 
G\"odel proved that the set of derivable formulas of the conjunction-negation fragment of intuitionistic logic 
coincides with the set of classically derivable formulas. 
This is also stated and proved in detail in \cite{Kl} (see Corollary to (a2) in p. 493). 
This implies that the conjunction-negation fragment has a finite semantics with respect to derivable formulas, 
that is, two-valued classical semantics.   
The natural question arises whether we also have a finite semantics when having premisses as well. 
This we solve in this section. 

As regards syntactics, in this section we will use the following version of the celebrated Glivenko Theorem 
and also the given Corollary. Glivenko Theorem was originally proved for intuitionistic logic in \cite{Gl2}. 
Before stating those facts, 
we say that a set of formulas $\Gamma \in \mathfrak{F_{\land, \neg}}$ is classically 
(respectively $\{\land, \neg \}$-) consistent iff
from $\Gamma$ we may not arrive to a contradiction in classical logic 
(respectively in the $\{\land, \neg \}$-fragment of intuitionistic logic), 
where by a contradiction we mean a pair $\varphi, \neg \varphi$ of formulas.  

\begin{teo} \label{GT}
 Let $\Gamma \cup \{ \varphi \} \subseteq \mathfrak{F_{\land, \neg}}$. 
 Then, if $\Gamma \vdash_c \neg \varphi$, then $\Gamma \vdash_{\{ \land, \neg \}} \neg \varphi$.
\end{teo}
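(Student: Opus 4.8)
The plan is to reduce the $\{\land,\neg\}$-statement to the analogous fact for the full conditional-negation fragment (or for full intuitionistic logic), where the classical version of Glivenko's theorem is standard, and then push the conditional back out of the proof. Concretely, I would first recall the classical Glivenko theorem in its usual form: for $\Gamma\cup\{\varphi\}$ in the full language, if $\Gamma\vdash_c\neg\varphi$ then $\Gamma\vdash_i\neg\varphi$. (If one wants a self-contained argument one proves this by induction on a classical derivation, checking that prefixing every formula of a Hilbert-style proof with a double negation turns classical axioms and modus ponens into intuitionistically derivable steps, and using that $\neg\neg\neg\psi\vdash_i\neg\psi$.) The genuine work here is the second half: showing that whenever the conclusion and all premisses lie in $\mathfrak{F}_{\{\land,\neg\}}$, an intuitionistic derivation of $\neg\varphi$ from $\Gamma$ can be replaced by one using only the $\{\land,\neg\}$-rules $(\mathrm{R}_\in),(\mathrm{R}_m),(\mathrm{R}_t),(\mathrm{R}_\land),(\mathrm{R}_{\neg I}),(\mathrm{R}_{\neg E})$.

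For that conservativity step I would argue syntactically, via cut-free derivations or via the natural-deduction system of the Introduction. The idea is that in a normal derivation ending in a negated formula $\neg\varphi$, with all leaves (open assumptions) among $\Gamma\subseteq\mathfrak{F}_{\{\land,\neg\}}$, the subformula property forces every formula occurring in the derivation to be a subformula of $\Gamma\cup\{\neg\varphi\}$, hence to lie in $\mathfrak{F}_{\{\land,\neg\}}$; since no $\lor$ or $\to$ appears anywhere, only the introduction/elimination rules for $\land$ and $\neg$ are used, and these are exactly the rules licensed by $\vdash_{\{\land,\neg\}}$. Translating that normal natural-deduction derivation into the $(\mathrm{R})$-calculus is routine: $\land$-introduction and elimination correspond to $(\mathrm{R}_\land)$ together with $(\mathrm{R}_\in),(\mathrm{R}_m)$; the negation rules of the Introduction correspond to $(\mathrm{R}_{\neg I})$ and $(\mathrm{R}_{\neg E})$; and discharging assumptions is handled by $(\mathrm{R}_t)$.

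The main obstacle is making the conservativity step precise: one must either invoke a normalization/cut-elimination theorem for the negation fragment (noting the nonstandard negation rules, since there is no $\perp$ in the language), and verify it still yields the subformula property, or else give a direct proof-transformation argument. A clean alternative, and probably the route the authors take, is to bypass normalization entirely and prove the $\{\land,\neg\}$-conservativity over intuitionistic logic by an algebraic/semantic argument — e.g. by showing every finitely generated $\{\land,\neg\}$-subreduct of a Heyting algebra embeds into one coming from a Heyting algebra in which the relevant joins and implications happen to exist — but that introduces its own bookkeeping. Either way, once the full classical Glivenko theorem and the conservativity of the $\{\land,\neg\}$-fragment of intuitionistic logic over intuitionistic logic are in hand, the statement follows immediately: from $\Gamma\vdash_c\neg\varphi$ we get $\Gamma\vdash_i\neg\varphi$, and since $\Gamma\cup\{\neg\varphi\}\subseteq\mathfrak{F}_{\{\land,\neg\}}$ the derivation may be taken inside the fragment, i.e. $\Gamma\vdash_{\{\land,\neg\}}\neg\varphi$.
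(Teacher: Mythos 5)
The paper does not actually prove Theorem~\ref{GT}: it is stated as a known version of Glivenko's theorem, with pointers to Glivenko's original paper and to Kleene, so there is no in-text argument to compare yours against. Your two-step route --- first the classical Glivenko theorem ($\Gamma \vdash_c \neg\varphi$ implies $\Gamma \vdash_i \neg\varphi$, by the usual double-negation induction on a classical derivation), then a separation/conservativity step pushing the intuitionistic derivation into the $\{\land,\neg\}$-fragment --- is a standard and correct way to establish the statement, and you rightly identify the second step as the one carrying all the weight.

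Two cautions on that second step. First, your appeal to the subformula property is slightly too quick as stated: the rule $(\mathrm{R}_{\neg E})$ (ex contradictione) has an arbitrary conclusion $\psi$, so it is not literally true that every formula occurring in a derivation is a subformula of $\Gamma\cup\{\neg\varphi\}$. What you need is the refined subformula property for \emph{normal} deductions in the style of Prawitz (where the conclusion of such an application may be taken atomic, or is otherwise a subformula of an open assumption or of the end formula); with that, the separation theorem for intuitionistic propositional natural deduction gives exactly the conservativity you want. This is a genuine theorem rather than a routine observation, and for the negation-as-primitive rules of the Introduction one should check that normalization still goes through (it does, but it deserves a citation or a proof). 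There is also a symmetric issue on the classical side: if $\vdash_c$ is defined over the full language, the hypothesis $\Gamma\vdash_c\neg\varphi$ may be witnessed by a derivation mentioning $\to$ and $\lor$, so the double-negation induction is carried out in the full language before separation is applied --- which is fine, but worth saying. Second, a more self-contained alternative that avoids normalization entirely is Kleene's route (the source the authors cite): run the G\"odel--Gentzen negative translation and observe that on $\{\land,\neg\}$-formulas it is the identity up to interderivability inside the fragment, using only that $\neg\neg\neg\psi$ and $\neg\psi$ are $\{\land,\neg\}$-interderivable and that $\neg\neg(\psi\land\chi)$ and $\neg\neg\psi\land\neg\neg\chi$ are $\{\land,\neg\}$-interderivable. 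Either way your outline is sound; only the normalization appeal needs to be made precise.
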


\begin{cor} \label{GTC}
 Let $\Gamma \subseteq \mathfrak{F_{\land, \neg}}$.
 If $\Gamma$ is $\{\land, \neg\}$-consistent, then $\Gamma$ is classically consistent.  
\end{cor}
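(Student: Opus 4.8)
The plan is to prove the contrapositive: assuming $\Gamma$ is classically inconsistent, I will exhibit a contradiction that is derivable from $\Gamma$ already in the $\{\land,\neg\}$-fragment. The one point that requires care — and it is the only real obstacle — is that Theorem \ref{GT} transfers from classical to $\{\land,\neg\}$-logic only \emph{negated} consequences of $\Gamma$; it does not let one move an arbitrary classical consequence of $\Gamma$ into the fragment. So the whole argument has to be phrased in terms of statements of the shape $\Gamma\vdash_c \neg(\cdot)$.

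So suppose $\Gamma$ is classically inconsistent, say $\Gamma\vdash_c\varphi$ and $\Gamma\vdash_c\neg\varphi$ for some $\varphi\in\mathfrak F_{\land,\neg}$. Then, by classical explosion, $\Gamma$ proves classically every formula of $\mathfrak F_{\land,\neg}$; in particular, fixing a propositional letter $p$, we have both $\Gamma\vdash_c\neg p$ and $\Gamma\vdash_c\neg\neg p$. (Equivalently, one may avoid invoking explosion and instead note that $\chi\vdash_{\{\land,\neg\}}\neg\neg\chi$ holds — apply (R$_{\neg I}$) to the two trivial derivations $\chi,\neg\chi\vdash_{\{\land,\neg\}}\chi$ and $\chi,\neg\chi\vdash_{\{\land,\neg\}}\neg\chi$ coming from (R$_\in$) and (R$_m$) — and then cut with $\Gamma\vdash_c\varphi$ to get $\Gamma\vdash_c\neg\neg\varphi$ alongside $\Gamma\vdash_c\neg\varphi$.)

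Now I would apply Theorem \ref{GT} twice. Taking the formula there to be $p$ turns $\Gamma\vdash_c\neg p$ into $\Gamma\vdash_{\{\land,\neg\}}\neg p$; taking it to be $\neg p$ turns $\Gamma\vdash_c\neg\neg p$ into $\Gamma\vdash_{\{\land,\neg\}}\neg\neg p$. But $\neg p$ together with $\neg(\neg p)$ is precisely a contradiction in the sense fixed just before Theorem \ref{GT}, so $\Gamma$ is $\{\land,\neg\}$-inconsistent (indeed, by (R$_{\neg E}$), it then proves every formula of the fragment). Contraposing: if $\Gamma$ is $\{\land,\neg\}$-consistent, then $\Gamma$ is classically consistent, which is the assertion of the corollary. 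Beyond the bookkeeping just described, I do not anticipate any difficulty.
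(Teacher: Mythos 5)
Your proof is correct and is exactly the intended argument: the paper states this corollary without proof as an immediate consequence of Theorem \ref{GT}, and your contrapositive argument---using explosion to produce the negated pair $\neg p$, $\neg\neg p$ classically and then applying Theorem \ref{GT} to each, which neatly sidesteps the fact that the theorem only transfers negated consequences---is the natural way to fill in that gap.
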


Regarding semantics, we will use the concepts of subalgabra and congruence, which we now state 
(for details or examples the reader may see \cite{BS}).

\begin{defi}
 Given two algebras $\mathbf{A}$ and $\mathbf{B}$ of the same type, 
 we say that $\mathbf{B}$ is a subalgebra of $\mathbf{A}$ iff 
 the universe of $\mathbf{B}$ is included in the universe of $\mathbf{A}$ and 
 every fundamental operation of $\mathbf{B}$ 
 is the restriction to the universe of $\mathbf{B}$ of the corresponding operation of $\mathbf{A}$. 
\end{defi}

\begin{defi}
 Given an algebra $\mathbf{A} = \langle A; F\rangle$, 
 a congruence on $\mathbf{A}$ is an equivalence relation $E$ on $A$ such that 
 for every n-ary operation $f$ in $F$ and elements $a_i, b_i$ in $A$, 
 
 if $a_i E b_i$, for all $i, 1 \leq i \leq n$, then $f(a_1, \dots, a_n) E f(b_1, \dots, b_n)$.  
\end{defi}

\noindent The diagonal relation and the all relation are the only trivial congruences. 

In this section, $\mathbf 3$ will stand for the algebra $\langle\{0, \,^1\!/_2, 1 \}; F\rangle$, 
where $F$ is either $\{\wedge, \neg \}$ or $\{\neg \}$. 
and $\land$ and $\neg$ stand for the usual meet and meet complement in a Heyting algebra 
(we might as well say that $a \wedge b=$ min $\{a, b\}$, for any $a, b$ in the universe of $\mathbf{3}$, 
$\neg 0 = 1$, and $\neg ^1\!/_2 = \neg 1 = 0$). 
In all cases, $D=\{ 1 \}$. 
It will be clear in the context which is the relevant $F$. 
The algebra $\mathbf{3}$ appears in Figure \ref{3} together with the only non-trivial congruence given by the ellipses.  
Note, also, that  $\langle\{0, 1 \}; \wedge, \neg \rangle$, 
where $\wedge$ and $\neg$ are the usual meet and meet complement, 
is a subalgebra of $\mathbf{3}$.

\begin{figure} [ht]
\begin{center}

\begin{tikzpicture}

    \tikzstyle{every node}=[draw, circle, fill=white, minimum size=4pt, inner sep=0pt, label distance=1mm]

    \draw (0,0) node (0)	[label= {[label distance= 4mm]0:\small $0$}]{};
    
    \draw (0,1) node (1/2)	[label= {[label distance= 4mm]0:\small $^1\!/_2$}]{};
    \draw (0,2) node (1)	[label= {[label distance= 4mm]0:\small $1$}]{};
    \draw (0)--(1/2)--(1);
 
    \draw (0,1.5) ellipse (.4 and .9);  
    \draw (0,0) ellipse (.4 and .4);
\end{tikzpicture}

\end{center}
\caption{\label{3} The algebra $\mathbf{3}$ with its only non-trivial congruence}
\end{figure}
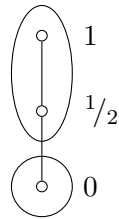

Let us now consider the fragment $\{\land, \neg \}$.  

\begin{prop} \label{Pcnf}
 Let $\Gamma \cup \{ \varphi \} \subseteq \mathfrak{F}_{\{\land, \neg\}}$. 
 Then, $\Gamma \vdash_{\{\land, \neg\}} \varphi$ iff $\Gamma \vdash_{\bf{3}} \varphi$. 
\end{prop}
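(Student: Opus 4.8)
The plan is to establish both directions; the forward one is routine and all the content sits in the converse.

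For $\Gamma\vdash_{\{\land,\neg\}}\varphi\Rightarrow\Gamma\vDash_{\mathbf{3}}\varphi$: a derivation in the $\{\land,\neg\}$-fragment is in particular an intuitionistic derivation, so $\Gamma\vdash_i\varphi$, and intuitionistic logic is sound over every Heyting algebra (with the top element as the only designated value), in particular over $\mathbf{3}$; this half needs nothing new. For the converse I argue the contrapositive: assuming $\Gamma\nvdash_{\{\land,\neg\}}\varphi$, I will produce a valuation $w$ into the universe of $\mathbf{3}$ with $\bar w\gamma=1$ for every $\gamma\in\Gamma$ and $\bar w\varphi\neq1$. First a normalization: using $(R_\land)$ on the conclusion together with the fact already used for Proposition \ref{Pcdf} — that a conjunctive premise may be replaced by its two conjuncts — I may assume that $\varphi$ is a propositional letter or a formula $\neg\chi$, and that every member of $\Gamma$ is a letter or a formula $\neg\chi$; since designation in $\mathbf{3}$ respects meets, a counter-valuation for this reduced instance is one for the original.

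Now two sub-cases. If $\Gamma\nvdash_c\varphi$ (classical derivability), then by completeness of classical logic for two-valued truth tables there is a two-valued valuation making all of $\Gamma$ true and $\varphi$ false; composing it with the subalgebra embedding $\langle\{0,1\};\land,\neg\rangle\hookrightarrow\mathbf{3}$ noted in the text, and using the standard fact that post-composing the canonical homomorphic extension of a valuation with a homomorphism is again such an extension, gives the valuation into $\mathbf{3}$ we want. So assume instead that $\Gamma\vdash_c\varphi$. If $\varphi$ were of the form $\neg\chi$, then Glivenko's Theorem \ref{GT} would yield $\Gamma\vdash_{\{\land,\neg\}}\neg\chi$, contradicting our standing assumption; hence $\varphi$ is a propositional letter $p$, and we are left with the situation $\Gamma\vdash_c p$ but $\Gamma\nvdash_{\{\land,\neg\}}p$.

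In this situation $p\notin\Gamma$ (otherwise $(R_\in)$ gives $\Gamma\vdash_{\{\land,\neg\}}p$), and $\Gamma$ is $\{\land,\neg\}$-consistent (otherwise $(R_{\neg E})$ gives $\Gamma\vdash_{\{\land,\neg\}}p$); hence by Corollary \ref{GTC} $\Gamma$ is classically consistent, so there is a two-valued valuation $v$ with $\bar v\gamma=1$ for all $\gamma\in\Gamma$, and $v(p)=1$ because $\Gamma\vdash_c p$. I then lift $v$ through the canonical map $q\colon\mathbf{3}\to\mathbf{2}$ onto the quotient of $\mathbf{3}$ by its only non-trivial congruence (Figure \ref{3}), which identifies the middle element of $\mathbf{3}$ with $1$, so that $q(x)=0$ exactly when $x=0$: set $w(r)=0$ whenever $v(r)=0$, let $w(p)$ be the middle element of $\mathbf{3}$, and let $w(r)=1$ for every other letter. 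This is consistent because $v(p)=1$ and $v$ sends every letter occurring in $\Gamma$ to $1$. Then $q\circ w=v$, so $q\circ\bar w=\bar v$ on all formulas; for a premise $\neg\chi$ we get $\bar v\chi=0$, hence $q(\bar w\chi)=0$, hence $\bar w\chi=0$ and $\bar w(\neg\chi)=1$; a premise that is a letter gets value $1$ by construction; and $\bar w p$ is the middle element, so $\bar w p\neq1$. Therefore $\Gamma\nvDash_{\mathbf{3}}p$, which finishes the argument.

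The only non-classical content is this last case, forced on us because $\{\land,\neg\}$-derivability is strictly weaker than classical derivability on letter conclusions (for instance $\neg\neg p\vdash_c p$ while $\neg\neg p\nvdash_{\{\land,\neg\}}p$). The two inputs that tame it are Glivenko's Theorem, which confines the gap to letter conclusions, and Corollary \ref{GTC}, which hands us a two-valued model of the premises; and the lift is straightforward precisely because $\mathbf{3}$ carries $\mathbf{2}$ both as a subalgebra and as a quotient, so $p$ can be seated at the middle value without disturbing the value of any negation in $\Gamma$. I expect the points requiring most care to be the invocation of Glivenko's Theorem with premises and the verification that the only non-trivial congruence of $\mathbf{3}$ does have quotient isomorphic to $\mathbf{2}$.
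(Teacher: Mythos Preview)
Your proof is correct and follows essentially the same approach as the paper: soundness for the forward direction, and for the converse, Glivenko's Theorem to reduce negated conclusions to a Boolean countermodel transported through the subalgebra $\mathbf{2}\hookrightarrow\mathbf{3}$, together with the non-trivial congruence on $\mathbf{3}$ to handle the residual case of a letter conclusion. Your organization differs only cosmetically---you split on whether $\Gamma\vdash_c\varphi$ rather than on the outermost shape of $\varphi$, you normalize the premises as well as the conclusion, and you phrase the congruence step via the quotient map $q\colon\mathbf{3}\to\mathbf{2}$ instead of invoking the congruence directly---but the underlying ideas coincide with the paper's.
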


\begin{proof}
 If $\Gamma \vdash_{\{\land, \neg\}} \varphi$, then $\Gamma \vdash_i \varphi$. 
 Using soundness of intuitionistic logic, it follows that $\Gamma \models_{\bf{3}} \varphi$. 
 On the other hand, suppose $\Gamma \nvdash_{\{\land, \neg\}} \varphi$. 
 There are three cases: (i) $\varphi = \varphi_1 \land \varphi_2 \land \cdots \land \varphi_n$, 
 where each $\varphi_i$ is a either a negation or a letter, 
 (ii) $\varphi = \neg \psi$, for some formula $\psi$, or (iii) $\varphi = p$, for some letter $p$. 
 In case (i), by (R$_\land$), $\Gamma \nvdash_{\{\land, \neg\}} \varphi_1 \land \varphi_2 \land \cdots \land \varphi_n$ iff 
 there is at least an $i$ such that $\Gamma \nvdash_{\{\land, \neg\}} \varphi_i$ such that  
 $\varphi_i$ is either a negation or a letter. 
 So, this case reduces to either case (ii) or case (iii). 
 In case (ii), using Theorem \ref{GT}, we have that $\Gamma \nvdash_c \neg \psi$. 
 Then, there is a valuation $v$ in the two element Boolean algebra 
 such that $v\psi=1$ for all $\psi \in \Gamma$ and $v\neg \varphi =0$.    
 Then, as the Boolean algebra of two elements with fundamental operations $\land$ and $\neg$ is a subalgebra of $\mathbf{3}$, 
 it follows, using the same valuation $v$, that $\Gamma \nvDash_{\bf{3}} \varphi$. 
 Finally, case (iii) means that we have $\Gamma \nvdash_{\{\land, \neg \}} p$. 
 Then, $\Gamma$ is $\{\land, \neg\}$-consistent. 
 Then, using Corollary \ref{GTC}, it follows that $\Gamma$ is classically consistent. 
 Then, there exists a assignment $v$ such that in the two-element Boolean algebra 
 we have that $\bar v\psi=1$ for all $\psi \in \Gamma$. 
 Now, let us define an assignment $w:\Pi \to \mathbf{3}$ such that $wp_i = vp_i$, for all $p_i \in \Pi$. 
 Then, as the given Boolean algebra is a subalgebra of $\mathbf{3}$, $\bar w\psi = 1$ for all $\psi \in \Gamma$. 
 If $\bar wp=0$, then we are done.  
 In case $\bar wp=1$, let us define $w'$ like $w$ except for $w'p = \,^1\!/_2$. 
 Now, as $\Gamma \nvdash_{\{\land, \neg\}} p$, 
 then, due to (R$_\in$) and (R$_\land$), $p$ can only appear as subfomula of a formula $\psi$ in $\Gamma$ 
 if it appears in the scope of a negation, and the value of $\psi$ will not change, 
 as $\langle(1,1), (1,^1\!/_2), (^1\!/_2,^1\!/_2), (^1\!/_2,1), (0,0)\rangle$ is a congruence relation. 
\end{proof}

Now we can easily deal with the negation fragment. 

\begin{prop} \label{Pnf}
 Let $\Gamma \cup \{ \varphi \} \subseteq \mathfrak{F}_{\{\neg\}}$. 
 Then, $\Gamma \vdash_{\{\neg\}} \varphi$ iff $\Gamma \models_{\bf{3}} \varphi$. 
\end{prop}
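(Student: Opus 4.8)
The plan is to follow the proof of Proposition~\ref{Pcnf}, which becomes considerably shorter here: every formula of $\mathfrak{F}_{\{\neg\}}$ has the form $\neg\cdots\neg p$ for a single letter $p$, so the conjunctive case~(i) of that proof disappears and the whole argument can be run letter by letter. For the left-to-right direction, if $\Gamma\vdash_{\{\neg\}}\varphi$ then $\Gamma\vdash_i\varphi$, and soundness of intuitionistic logic with respect to Heyting algebras --- $\mathbf{3}$ being the $\{\neg\}$-reduct of the three-element chain --- gives $\Gamma\models_{\mathbf{3}}\varphi$, exactly as in Proposition~\ref{Pcnf}.

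For the converse I would argue by contraposition: assuming $\Gamma\nvdash_{\{\neg\}}\varphi$, I would exhibit a valuation $w:\Pi\to\mathbf{3}$ with $\bar w\psi=1$ for all $\psi\in\Gamma$ and $\bar w\varphi\neq 1$. The first step is to record a few elementary $\{\neg\}$-derivabilities --- $\chi\vdash_{\{\neg\}}\neg\neg\chi$ and $\neg\neg\neg\chi\vdash_{\{\neg\}}\neg\chi$, each proved by a short use of (R$_{\neg I}$) --- from which it follows that, modulo mutual $\{\neg\}$-derivability, the only formulas over a letter $p$ are $p$, $\neg p$ and $\neg\neg p$. Next, for every letter $p$ put $T_p=\{\,j\in\{0,1,2\}:\Gamma\vdash_{\{\neg\}}\rho_j(p)\,\}$, where $\rho_0(p)=p$, $\rho_1(p)=\neg p$, $\rho_2(p)=\neg\neg p$. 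Because (R$_{\neg E}$) shows that if $\Gamma$ proved some formula together with its negation then $\Gamma\vdash_{\{\neg\}}\varphi$, contrary to assumption, and because $p\vdash_{\{\neg\}}\neg\neg p$, a short check shows that $T_p$ must be one of $\varnothing$, $\{1\}$, $\{2\}$, $\{0,2\}$; moreover, if $T_p=\varnothing$ then $p$ occurs in no member of $\Gamma$.

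I would then define $w$ by: $w(p)=1$ if $T_p=\{0,2\}$; $w(p)=0$ if $T_p=\{1\}$; and $w(p)={}^1\!/_2$ if $T_p=\{2\}$ or $T_p=\varnothing$ --- with the single exception that, when $\varphi$ reduces to $\neg\neg p_0$ for its letter $p_0$ and $T_{p_0}=\varnothing$, one sets $w(p_0)=0$ instead. The verification splits into two routine parts. First, $\bar w\psi=1$ for every $\psi\in\Gamma$: writing $\psi$ in one of the reduced forms $\rho_j(q)$ we have $j\in T_q$, and the four values assigned above were chosen exactly so that $\bar w\rho_j(q)=1$ whenever $j\in T_q$; the exceptional tweak of $w(p_0)$ is harmless precisely because $T_{p_0}=\varnothing$ means $p_0$ does not occur in $\Gamma$. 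Second, $\bar w\varphi\neq 1$: writing $\varphi$ in reduced form, the cases $\varphi\equiv p_0$ and $\varphi\equiv\neg p_0$ follow immediately from $0\notin T_{p_0}$ and $1\notin T_{p_0}$, while $\varphi\equiv\neg\neg p_0$ is handled either by the value $w(p_0)=0$ coming from $T_{p_0}=\{1\}$ or by the exceptional tweak. Hence $\Gamma\nvDash_{\mathbf{3}}\varphi$.

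The only real work --- and the step most prone to slips --- is the bookkeeping that pins $T_p$ down to the four listed possibilities and, in tandem, the handling of the exceptional subcase $\varphi\equiv\neg\neg p_0$: one must check that re-setting $w(p_0)$ to $0$ falsifies no premise, which is why one first observes that a letter with empty $T_p$ cannot occur in $\Gamma$. A tidier-looking alternative would be to copy cases~(ii) and~(iii) of the proof of Proposition~\ref{Pcnf} verbatim, using the restrictions of Glivenko's theorem (Theorem~\ref{GT}) and of its corollary to $\mathfrak{F}_{\{\neg\}}$; but those restricted forms are not literally what is stated there and would themselves have to be established, by the same elementary derivabilities above, so the self-contained argument sketched here is about as short.
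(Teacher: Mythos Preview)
Your proof is correct but takes a genuinely different route from the paper's. The paper dispatches the proposition in one line by invoking cases~(ii) and~(iii) of Proposition~\ref{Pcnf}, which in turn rest on Glivenko's Theorem~\ref{GT} and its Corollary~\ref{GTC}. You instead give a direct, self-contained construction: reduce every $\{\neg\}$-formula to one of $p$, $\neg p$, $\neg\neg p$; classify each letter by the set $T_p$ recording which of these three follow from $\Gamma$; pin $T_p$ down to one of four possibilities; and read off a $\mathbf{3}$-valuation from that classification, with one small adjustment for the $\neg\neg p_0$ subcase. Your approach is longer but fully elementary, and it sidesteps the point you correctly raise at the end---that Theorem~\ref{GT} and Corollary~\ref{GTC} are stated for $\vdash_{\{\land,\neg\}}$, so importing cases~(ii) and~(iii) verbatim tacitly assumes conservativity of $\vdash_{\{\land,\neg\}}$ over $\vdash_{\{\neg\}}$ on $\mathfrak{F}_{\{\neg\}}$. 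The paper's version is terser and better matched to the surrounding narrative; yours is more self-contained and makes no appeal to Glivenko at all.
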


\begin{proof}
 The left to right direction is the same as in the previous proof. 
 For the other direction, just consider cases (ii) and (iii) in the previous proof. 
\end{proof}

It is clear, then, due to propositions \ref{Pcnf} and \ref{Pnf}, 
that the fragments $\{\land, \neg \}$ and $\{ \neg \}$ have finite semantics. 

A final remark. We have seen that we do not have finite semantics for many fragments of intuitionistic logic.  
However, the Finite Model Property (FMP) holds for intuitionistic logic (so, also, for any of its fragments). 
The difference may be understood as the difference between the quantifications $\exists \forall$ and $\forall \exists$, that is, 
we do not have a finite semantics for all cases. 
Now, given any particular case that does not hold, 
we may find a finite interpretation (for example, a Heyting algebra) that proves that it is not the case.  
The reader interested in a proof of the FMP for intuitionistic logic may see \cite[Section 11.9 ]{D}.

\end{document}